\DeclareMathOperator{\gal}{Gal}
\DeclareMathOperator{\gl}{GL}
\DeclareMathOperator{\mon}{Mon}
\DeclareMathOperator{\sym}{Sym}
\DeclareMathOperator{\mt}{MT}
\DeclareMathOperator{\su}{SU}
\DeclareMathOperator{\sh}{Sh}
\DeclareMathOperator{\coker}{Coker}
\DeclareMathOperator{\mult}{mult}
\DeclareMathOperator{\psu}{PSU}
\DeclareMathOperator{\psp}{Psp}
\DeclareMathOperator{\gsp}{Gsp}
\theoremstyle{plain}
\newtheorem{thm}{Theorem}[section]
\newtheorem{theorem}[thm]{Theorem}
\newtheorem{lemma}[thm]{Lemma}
\newtheorem{proposition}[thm]{Proposition}
\theoremstyle{definition}
\newtheorem{remark}[thm]{Remark}
\newtheorem{definition}[thm]{Definition}
\newtheorem{example}[thm]{Example}
\numberwithin{equation}{thm}
\newtheorem{constr}[thm]{Construction}
\newcommand{\C}{{\mathbb C}}
\renewcommand{\P}{{\mathbb P}}
\newcommand{\Q}{{\mathbb Q}}
\newcommand{\V}{{\mathbb V}}
\newcommand{\Z}{{\mathbb Z}}
\newcommand{\Hom}{{\rm Hom}}
\def\blfootnote{\gdef\@thefnmark{}\@footnotetext}
\begin{document}

\title{Shimura varieties and abelian covers of the line}
\author{Abolfazl Mohajer}
\address{Universit\"{a}t Mainz, Fachbereich 08, Institut f\"ur Mathematik, 55099 Mainz, Germany}
\email{mohajer@uni-mainz.de}
\blfootnote{\textup{2000} \textit{Mathematics Subject Classification}:
	11G15, 14G35, 14H40}
\keywords{Jacobian variety, Shimura variety, special subvariety}
\maketitle

\begin{abstract}

We prove that under some conditions on the monodromy, families of abelian covers of the
projective line do not give rise to (higher dimensional) Shimura subvarieties in $A_g$. This is achieved by
a reduction to $p$ argument. We also use another method based on monodromy computations to show that two dimensional 
subvarieties in the above locus are not special. In particular it is shown that such families have usually large monodromy groups. Together
with our earlier results, the above mentioned results contribute to classifying the special families in the moduli space of abelian
varieties and partially completes the work of several authors including the author's previous work.
\end{abstract}

\section{introduction} \label{intro}

In this paper we continue the study started in \cite{MZ} of
Shimura subvarieties generated by families of abelian covers $C\to
\mathbb{P}^{1}$. More precisely, let $A_{g}$ the moduli space of
principally polarized complex abelian varieties of dimension $g$
and $M_{g}$ be the (coarse) moduli space of non-singular complex
algebraic curves of genus $g$. The period mapping $j:M_g\to A_g$
is well-known to be an immersion away from the hyperelliptic
locus. We denote the image of this map by $T_g^{\circ}$. Its
closure is denoted by $T_g$. By a conjecture of Coleman, it is expected that
for large $g$, there is no positive dimensional special (Shimura)
subvariety $Z\subset T_g$ with $Z\cap T_g^{\circ}\neq \emptyset$, see \cite{C}. This problem
has been considered by many authors. If $g\leq 7$, then 
there are counter-examples going back to Shimura \cite{S} and more recently \cite{DN}, 
\cite{FGP}, \cite{FPP}, \cite{M} and \cite{MO}. A special Subvariety of $A_g$ is a totally geodesic subvariety,
hence the above expectation that there is no such subvariety in a
rather curved space such as $T_g$, see \cite{CFG}, where also the
fundamental form of the period map is investigated to give an
upper bound for the possible dimension of a totally geodesic
submanifold of $A_g$. In this paper, we consider subvarieties 
$Z$ of $A_g$ arising from families of abelian covers of $\mathbb{P}^{1}$. In this case, 
the subvariety $Z$ will be of dimension $s-3$, where $s$ is the (fixed) number of branch 
points of the covers, see section 2. In \cite{MZ}, we have proved various results
in this direction. More explicitly, we have classified all
1-dimensional families of irreducible abelian coverings of the
line. This work is a continuation in the line of the previous works of
several authors, for example \cite{CFG}, \cite{DN}, \cite{FGP}, \cite{FPP}, \cite{M} and \cite{MO}. There is a special subvariety $S(G)$, 
constructed using endomorphims of Jacobians induced by the action of $G$, containing $Z$.  
If $\dim Z=\dim S(G)=s-3$, then since $Z$ is irreducible, it follows that $Z=S(G)$ and hence $Z$ is special. 
However, if $Z\neq S(G)$, there is no reason that $Z$ is not a special subvariety. It could still be that it is
a special subvariety of smaller dimension. Our first main result, Theorem~\ref{mainthm1}, amounts to say that
under some conditions on the monodromy, the special subvarieties are exactly those that satisfy
the above equality. To prove the above theorem, we have used a reduction to $p$ technique due to Dwork and Ogus; 
these techniques were used in \cite{DN} by de Jong and Noot in the setting of Shimura subvarieties in the Torelli locus; still later these ideas 
were used in \cite{M} to give a complete answer in the case of cyclic covers. We remark that one can develop the same ideas to
get special families of covers of other curves, e.g., elliptic curves,
rather than the projective line, see \cite{FPP}. Note that in \cite{MZ}, the families which satisfy the above equality are listed
for a bounded number of $N,m$ and $s$. This table appears also in \cite{MO} (and \cite{FGP}) and is obtained by a slightly different method. Moreover, in \cite{MZ} we have shown that
when the Torelli image $Z$ is of large dimension (i.e., $s$ is large enough), the family of abelian covers never gives
rise to a  special subvariety in $A_g$. This was done, by computing
the dimension of eigenspaces of the cohomology and showing that
these spaces constitute a large monodromy group. We push this
method further in the present article and our second main result, Theorem~\ref{mainthm2}, shows that, except for two possible exceptions,
there are no 2-dimensional special subvarieties arising from families of abelian coverings in $A_g$. The proof shows however, that as in the case
of large-dimensional subvarieties treated in \cite{MZ}, our method
can be also applied for other low-dimensional subvarieties. 

\section*{Acknowledgement} The author would like to thank the referee for useful remarks and suggestions
that increased the clarity and overall readability of the paper.

\section{preliminaries}

An abelian Galois cover of $\mathbb{P}^{1}$ is determined by a
collection of equations in the following way: Take an $m\times
s$ matrix $A=(r_{ij})$ with entries in
$\mathbb{Z}/N\mathbb{Z}$ for some $N\geq 2$. Let
$\overline{\mathbb{C}(z)}$ be the algebraic closure of
$\mathbb{C}(z)$. For each $i=1,\cdots,m$ choose a function $w_{i}\in
\overline{\mathbb{C}(z)}$ with
\[w_{i}^{N}=\prod_{j=1}^{s}(z-z_{j})^{\widetilde{r}_{ij}} \text{ for  } i=
1,\cdots, m, \]
in $\mathbb{C}(z)[w_{1},...,w_{m}]$ with $z_j\in
\mathbb{C}$ and $\widetilde{r}_{ij}$ the lift of $r_{ij}$ to $\mathbb{Z}\cap [0,N)$. We assume that the sum of columns are zero (in $\Z/N$), so that
the the cover os not branched over the infinity. There exists a projective non-singular curve $Y$
birational to the affine curve defined by the above equations together with a 
covering map $\pi : Y\to \mathbb{P}^{1}$ with abelian deck transformation group. The group $G$ can be realized as 
the column span of the matrix $A$. The local monodromy around the branch point $z_{j}$ is
given by the column vector $(r_{1j},....,r_{mj})^{t}$ and hence the
order of ramification over $z_{j}$ is $
\frac{N}{\gcd(N,\widetilde{r}_{1j},..,\widetilde{r}_{mj})}$. This fact combined with the Riemann-Hurwitz formula gives that the genus $g$ of the cover
can be computed by the following formula
\[g= 1+ d(\frac{s-2}{2}- \frac{1}{2N}\sum_{j=1}^{s}
\gcd(N,\widetilde{r}_{1j},...,\widetilde{r}_{mj})),\] 
where $d$ is the degree of the covering which is equal, as pointed out above,
to the column span (equivalently row span)  of the matrix $A$. In
this way, the Galois group $G$ of the covering will be a subgroup
of $(\mathbb{Z}/N\mathbb{Z})^{m}$.

\begin{remark} \label{rowspan}

Consider two families of abelian covers with
matrices $A$ and $A^{\prime}$ over the same
$\mathbb{Z}/N\mathbb{Z}$. If the row spans of $A$ and $A^{\prime}$ are equal,
then the two families are isomorphic. See \cite{MZ}.
\end{remark}

\begin{example} \label{explicitabelian}
 We provide an explicit example of an abelian cover of $\P^1$ using the datum $(N,s,A)$.
 Consider the triple $(3,5,A=\begin{pmatrix} 2&1&2&1&0\\
0&0&1&1&1 \end{pmatrix})$ and fix $z_1,\cdots, z_5\in \C$ (these are the branch points, for example take $z_1,\cdots, z_5=1,\cdots, 5$) . Note that this is one of the exceptional families
(family $III^*$) that appear in section 5, Theorem~\ref{mainthm2}. From the matrix one can read that the cover is given by two equations

\begin{align}
w_1^3&=(z-z_1)^2(z-z_2)(z-z_3)^2(z-z_4) \nonumber\\
w_2^3&=(z-z_3)(z-z_4)(z-z_5) \nonumber
\end{align}

The Galois group $G$ is equal to the column span of the marix $A$ 
which is equal to $\Z/3\Z \times \Z/3\Z$. This can be seen as follows and the proof can be easily generalized to 
an arbitrary abelian covering. Let $l_1,\cdots, l_5$ be the five columns of the matrix $A$. Consider the field 
$E=\C(z)[u_1,\cdots, u_5]$, with $u_j$ a third root of $z-z_j$. Note that if $F$ is the fraction field of the covering,
then $F\subseteq E$. Now the set of products $\displaystyle \prod_{j=1}^{5}u_j^{c_j}$ form a basis for $E$ over $\C(z)$ 
for $0\leq c_j<3$. The Galois group of $E$ over $\C(z)$ is $(\Z/3\Z)^4$ with generators $\psi_j(u_j)=\xi_3u_j$ for $\xi_3$ 
a primitive third root of unity. If we take the products $\displaystyle \prod_{j=1}^{5}u_j^{\widetilde{r}_{j}}$ for $(r_1,\cdots, r_5)$
in the row span of $A$, then we get a basis for $F$ over $\C(z)$. So, in particular, the degree of the covering is also the 
size of the row span of $A$. Note that the elements of the Galois group of $F$ over $\C(z)$ are given by the restriction of elements from Galois group of $E$ over $\C(z)$
to $F$ (in other words the homomorphism $\gal(E/\C(z))\to \gal(F/\C(z))$ given by the restriction of automorphisms is surjective). 
Given an element $\psi=\displaystyle \prod_{j=1}^{5}\psi_j^{p_j}$, we compute its action on $w_1$ and $w_2$ (i.e., its restriction to $F$).
$\psi$ acts on $w_i$ by multiplication with $\displaystyle \xi_3^{\sum_{j=1}^{5}r_{ij}p_j}$. Thus the column vector $(c_1,c_2)^t=p_j\cdotp l_j$ gives the action of $\psi$ on $w_i$
which is by multiplication with $\xi_3^{c_i}$. This shows that the Galois group is isomorphic to the column span of $A$. Furthermore, each column vector
$l_j$ records the local monodromy around $z_j$ and hence the order of ramification around $z_j$ is 3. Using the Riemann-Hurwitz formula one sees that the genus
of the covering is thus equal to $7$.

\end{example}

Let $T \subset (\mathbb{A}^{1})^{s}$ be the complement of the big
diagonals, i.e., $T=\mathcal{P}_{s}= \{(z_{1},....,z_{s})\in
(\mathbb{A}^{1})^{s}\mid z_{i}\neq z_{j} \text{   }\forall i\neq j \}$. Over
this open affine set, we define a family of abelian covers of
$\mathbb{P}^{1}$ to have the equation:
\[w_{i}^{N}=\prod_{j=1}^{s}(z-z_{j})^{\widetilde{r}_{ij}} \text{ for  }i=
1,\cdots, m,\]
where the tuple $(z_{1},...,z_{s})\in T$ and $\widetilde{r}_{ij}$
is the lift of $r_{ij}$ to $\mathbb{Z}\cap [0,N)$ as before. 
Varying the branch points we get a family $f:C\to T$ of
smooth projective curves over $T$ whose fibers $C_t$ are abelian
covers of $\mathbb{P}^{1}$ introduced above. If $f:C\rightarrow T$ is a family of abelian covers constructed as
above, we write $J\rightarrow T$ for the relative Jacobian of $C$
over $T$. This family gives a natural map $j:T\rightarrow A_{g}$.
Let $Z= Z(N,s,A)$ be the closure $\overline{j(T)}$ in $A_{g}$.
Hence $Z=Z(N,s,A)$ is a closed algebraic subvariety in $A_{g}$ and we have $\dim Z=s-3$, see \cite{FGP}, \S 2.3 for a 
detailed discussion on this. We call the subvariety $Z$ the \emph{moduli variety} associated to the family $f:C\rightarrow T$.
As $A_g$ has the structure of a Shimura variety, it makes sense to talk
about its special (or Shimura) subvarieties. Our goal is to classify the cases where $Z$ is a Special subvariety. Such families 
have a dense set of CM fibers. On the other hand, if $Z$ is not special then the Andr\'e-Oort theorem for $A_g$, 
see \cite{T}, implies that the set of CM fibers is not dense in $Z$. Let $G$ be a finite
abelian group. We denote by $\mu_{G}$ the group of the characters
of $G$, i.e., $\mu_{G}=\Hom(G,\mathbb{C}^{*})$. Consider a Galois
covering $\pi: X\rightarrow \mathbb{P}^{1}$ with Galois group $G$ constructed at the beginning of this section.
The group $G$ acts on the sheaves $\pi_{*}(\mathcal{O}_X)$, $\pi_{*}(\omega_X)$
($\omega_X$ is the canonical sheaf of $X$) and
$\pi_{*}(\mathbb{C})$ via its characters and we get corresponding
eigenspace decompositions $\pi_{*}(\mathcal{O})=\oplus_{\chi \in
	\mu_{G}} \pi_{*}(\mathcal{O})_{\chi}$ and
$\pi_{*}(\mathbb{C})=\oplus_{\chi \in \mu_{G}}
\pi_{*}(\mathbb{C})_{\chi}$. 

\begin{remark} \label{abeliangroupcharacter}
If $G$ is a finite abelian group,
then $\mu_G= \Hom(G,\mathbb{C}^{*})$ is isomorphic to $G$. To see this, write $G=\Z_{r_1}\times\cdots \times \Z_{r_m}$ as a product of finite cyclic groups. 
There is an isomorphism $\phi_G\colon G\to \mu_G$ given by sending the element $g\in G$ to the character 
$\chi_g:G\to\C^{*}, h\mapsto \exp(2\pi igD^{-1}h^t)$, where $D=diag(r_1,\cdots,r_m)$ and elements of $G$ are regarded as $1\times m$ matrices.
\end{remark}

From now on, we fix an isomorphism of $G$ with a product of finite cyclic groups and
an embedding of $G$ into $(\mathbb{Z}/N)^{m}$, where $m$, as in the beginning of this section, is the number of rows of the matrix of the covering. By the above isomorphism, we identify a 
character $\chi$ with the corresponding group element $n\in G$. Let $l_{j}$ be the $j$-th column of the matrix $A$. As mentioned
earlier, the group $G$ can be realized as the column span of the
matrix $A$. Therefore we may assume that $l_{j}\in G$. For a
character $\chi$, $\chi(l_{j})\in \mathbb{C}^{*}$ and since $G$ is
finite $\chi(l_{j})$ will be a root of unity. Let
$\chi(l_{j})=e^{\frac{2\alpha_{j}\pi i}{N}}$, where $\alpha_{j}$
is the unique integer in $[0,N)$ with this property. 

\begin{remark} \label{eigenspacedim}
Equivalently, the $\alpha_{j}$ can be obtained in the following way: Note that the abelian Galois
group $G$ of the covering is a (possibly proper) subgroup of
$\mathbb{Z}^{m}_{N}$ and therefore we can denote an element of $G$
by an $m$-tuple $n=(n_{1},...,n_{m})$. We regard $n$ as an $1\times
m$ matrix. Then the matrix product $n\cdotp A$ is meaningful and we set
$n\cdotp A=(\alpha_{1},...,\alpha_{s})$. We call this tuple, \emph{the associated tuple} to the eigenspace. Here all of the operations are
carried out in $\mathbb{Z}/N$ but the $\alpha_{j}$ are regarded as
integers in $[0,N)$. The space $H^{1,0}_{n}$ of
differential forms on which $G$ acts via character $n$ is
generated over $\mathbb{C}(z)$ by the differential form
$\omega_n=\prod(z-z_{j})^{-t_{j}(-n)}dz$, where
$t_{j}(n)=\langle\frac{\sum_{i=1}^{m} n_{i}\widetilde{r}_{ij}}{N}\rangle $ and
$\langle \cdot \rangle$ denotes the fractional part of a real number. Set $t(n)= \sum t_{j}(-n)$. It is then
straightforward to check that the forms $\omega_{n,\nu}=z^{\nu} \prod(z-z_{j})^{-t_{j}(-n)}dz$ constitute a $\mathbb{C}$-basis for $H^{1,0}_{n}$
where $0\leq \nu \leq t(n)-2$. See \cite{W}, Lemma 2.6. So $d_n=\dim H^{1,0}_{n}$ is equal to
$t(n)-1$.
\end{remark}

Summarizing the above, $d_n$ can be computed by the below formula.

\begin{proposition} [\cite{MZ}, Prop. 2.8] \label{dncomputation} 
For $C$ an abelian cover of $\mathbb{P}^{1}$ we have: 
\[d_{n}=h^{1,0}_{\chi}(C)=-1+\sum_{1}^{s}\langle - \frac{\alpha_{j}}{N}\rangle.\]
\end{proposition}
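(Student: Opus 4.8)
The plan is to derive the formula for $d_n = h^{1,0}_\chi(C)$ directly from the explicit basis of differential forms recalled in Remark~\ref{eigenspacedim}. First I would recall that, by that remark, the eigenspace $H^{1,0}_n$ has the $\mathbb{C}$-basis $\{\omega_{n,\nu} = z^\nu \prod_j (z-z_j)^{-t_j(-n)}\,dz : 0 \le \nu \le t(n)-2\}$, so its dimension is exactly $t(n)-1$, where $t(n) = \sum_{j=1}^s t_j(-n)$ and $t_j(n) = \langle \frac{1}{N}\sum_{i=1}^m n_i \widetilde{r}_{ij}\rangle$. Thus the whole statement reduces to the identity $t_j(-n) = \langle -\frac{\alpha_j}{N}\rangle$ for each $j$, after which summing over $j$ and subtracting $1$ gives the claimed formula.

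To check that identity, I would unwind the definition of $\alpha_j$. By the discussion preceding Remark~\ref{eigenspacedim}, $\alpha_j$ is the unique integer in $[0,N)$ with $\chi(l_j) = e^{2\pi i \alpha_j/N}$, and equivalently $(\alpha_1,\dots,\alpha_s) = n\cdot A$ computed in $\mathbb{Z}/N$; in particular $\alpha_j \equiv \sum_{i=1}^m n_i \widetilde{r}_{ij} \pmod N$. Therefore $\frac{1}{N}\sum_i n_i \widetilde{r}_{ij}$ and $\frac{\alpha_j}{N}$ differ by an integer, so their fractional parts agree: $t_j(n) = \langle \frac{1}{N}\sum_i n_i \widetilde{r}_{ij}\rangle = \langle \frac{\alpha_j}{N}\rangle$. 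Replacing $n$ by $-n$ simply replaces $\alpha_j$ by the representative in $[0,N)$ of $-\alpha_j \bmod N$, and one has $\langle \frac{-\alpha_j}{N}\rangle = \langle \frac{1}{N}\sum_i(-n_i)\widetilde{r}_{ij}\rangle = t_j(-n)$ as an identity of fractional parts (valid even when $\alpha_j = 0$, both sides being $0$). Summing, $t(-n) = \sum_{j=1}^s \langle -\frac{\alpha_j}{N}\rangle$, wait—I should be careful with signs: with $t(n) := \sum_j t_j(-n)$ as in the remark, we get $t(n) = \sum_{j=1}^s \langle -\frac{\alpha_j}{N}\rangle$, and hence $d_n = t(n) - 1 = -1 + \sum_{j=1}^s \langle -\frac{\alpha_j}{N}\rangle$, which is exactly the assertion.

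I would also remark on why $d_n$ computes $h^{1,0}_\chi(C)$ rather than just the dimension of a space of rational forms: the forms $\omega_{n,\nu}$ are precisely the $G$-eigenforms with character $\chi$ that are regular on the smooth projective model $C$ (this is the content of the cited Lemma~2.6 of \cite{W}, where the exponents $-t_j(-n)$ are exactly engineered so that the form extends holomorphically over the ramification points and over infinity), so that $H^{1,0}_n = H^0(C,\omega_C)_\chi$ and the Hodge eigenspace has the stated dimension. The main obstacle, such as it is, is purely bookkeeping: matching the two conventions for $\alpha_j$ (the character value $\chi(l_j)$ versus the matrix product $n\cdot A$), correctly tracking the sign in passing between $n$ and $-n$, and handling the boundary case $\alpha_j \equiv 0$ where $\langle -\alpha_j/N\rangle = 0$ contributes nothing; none of this is deep, but it must be done consistently. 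Everything else is the quotation of Remark~\ref{eigenspacedim} and the reference \cite{W}.
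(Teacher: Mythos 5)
Your proof is correct and follows exactly the route the paper intends: the proposition is presented as a summary of Remark~\ref{eigenspacedim} (which gives $d_n=t(n)-1$ via the explicit basis $\omega_{n,\nu}$ from \cite{W}), combined with the observation that $\alpha_j\equiv\sum_i n_i\widetilde{r}_{ij}\pmod N$ so that $t_j(-n)=\langle-\alpha_j/N\rangle$. Your careful tracking of the sign convention and the boundary case $\alpha_j=0$ is exactly the bookkeeping the paper leaves implicit.
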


\begin{example}
We provide an explicit example of the computation of the $d_n$ using Remark~\ref{eigenspacedim}. Consider
the family in Example~\ref{explicitabelian} and let $n=(1,1)\in \Z/3\Z \times \Z/3\Z$ and $(1,1).A=(\alpha_1,\cdots, \alpha_5)=(2,1,0,2,1)$. 
Let 
\[\omega=\displaystyle \prod_{j=1}^{5}(z-z_j)^{-\frac{\alpha_j}{3}}dz=(z-z_1)^{-\frac{2}{3}}(z-z_2)^{-\frac{1}{3}}(z-z_4)^{-\frac{2}{3}}(z-z_5)^{-\frac{1}{3}}dz.\]
Recall the operators $\psi_j$ of Example~\ref{explicitabelian}. The action of the $\psi_j$ on the function field $F$ of the cover is linear 
and $\displaystyle \prod_{j=1}^{5}(z-z_j)^{-\frac{\alpha_j}{3}}$ spans the simultaneous eigenspace where $\psi_j$ acts by multiplication by $\xi^{\alpha_j}_3$. 
Thus any form in $H^{1,0}_{n}$ is of the form $p(z)\omega$ with $p(z)\in \C(z)$.
One can show that a meromorphic one-form $\eta=p(z)\omega$ is holomorphic if and only if $p(z)$ is a constant. 
Indeed, let $p(z)=p_0(z)\displaystyle \prod_{j=1}^{5}(z-z_j)^{s_j}$ where $p_0(z)$ has no zeros or poles in $z_j$. The order of ramification
of the cover over each $z_j$ is 3 so locally $u^3=(z-z_j)$ and $\eta$ is proportional to $u^{3s_j-\alpha_j+2}du$ near a lift of $z_j$. This is hlomorphic at lifts of $z_j$ iff
$s_j\geq \frac{-2+\alpha_j}{3}$. As $s_j$ is an integer, this implies that $s_j\geq 0$. In particular $p(z)$ is a polynomial, say of degree $d$. The condition that $\eta$
is holomorphic at lifts of $\infty$ gives that $d\leq 0$ (by using the chart $z\mapsto \frac{1}{z}$) so $d=0$. 
\end{example}

Consider the moduli variety $Z$ of the family as in the introduction. There are two special subvarieties containing $Z$ denoted by $S(G)$ and $S_f$. The  special subvariety $S(G)$ 
is defined by the action of the group $G$. This subvariety also appears in 
\cite{FGP} \S 3.2 ($Z(D_G)$ in their notation). It is constructed using endomorphims of Jacobians induced by the action of $G$. For more details we refer to \cite{M},
which is probably the first work to introduce these ideas, see also \cite{MZ}. 
In particular, if $\dim Z=\dim S(G)=s-3$, then since $Z$ is irreducible, it follows that $Z=S(G)$ and hence $Z$ is special. Therefore if we can compute
the dimension of $S(G)$, this gives us a numerical condition for $Z$ to be special. This is so far the main criterion for detecting if $Z$ is special. In fact our main theorems
in the section 5 amount to say that the special subvarieties are exactly those satisfying this condition. Note that in \cite{MZ}, the families which satisfy this condition are listed
for a bounded number of $N,m$ and $s$. Dimension of $S(G)$ can be computed by the following lemma.
\begin{lemma} [\cite{MZ}, Lemma 2.6] \label{dimS(G)}
$\dim S(G)= \sum_{2n\neq 0} d_{n}d_{-n}+ \frac{1}{2}\sum_{2n=0}d_{n}(d_{n}+1)$.
\end{lemma}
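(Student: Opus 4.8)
The plan is to obtain $\dim S(G)$ as the dimension of the Hermitian symmetric domain underlying the Shimura datum that defines $S(G)$, and to compute that dimension as the dimension of the space of first-order deformations of the polarized Hodge structure on $H^1$ that are compatible with the $G$-action. Concretely: fix a fibre $C$ of the family with Jacobian $J$ and point $[J]\in Z\subseteq S(G)$. Via the principal polarization one has $T_{[J]}A_g\cong \sym^2 H^1(J,\sO_J)=\sym^2 H^{0,1}(C)$, and --- this is the step that uses the actual construction of $S(G)$ out of the $G$-equivariant endomorphisms of $J$ (cf. \cite{M}, \cite{MZ}, \cite{FGP}) --- the subspace $T_{[J]}S(G)$ is exactly the $G$-invariant part. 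Hence
\[
\dim S(G)=\dim\bigl(\sym^2 H^{0,1}(C)\bigr)^{G},
\]
and the problem reduces to a representation-theoretic count.

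For that count I would decompose $H^{0,1}(C)=\bigoplus_{n\in G}H^{0,1}_n$ into $G$-eigenspaces. Since complex conjugation sends $H^{1,0}_n$ to $H^{0,1}_{-n}$ and commutes with the $G$-action, $\dim H^{0,1}_n=d_{-n}$. Then
\[
\sym^2\Bigl(\bigoplus_n H^{0,1}_n\Bigr)=\bigoplus_n \sym^2 H^{0,1}_n\ \oplus\ \bigoplus_{\{n,m\},\,n\neq m}\bigl(H^{0,1}_n\otimes H^{0,1}_m\bigr),
\]
with $G$ acting on $\sym^2 H^{0,1}_n$ through the character $2n$ and on $H^{0,1}_n\otimes H^{0,1}_m$ through $n+m$. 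Extracting the trivial-character part, the first family survives exactly for $2n=0$, contributing $\binom{d_{-n}+1}{2}$, and the second survives exactly for the unordered pairs $\{n,-n\}$ with $2n\neq0$, contributing $d_{-n}d_n$ for each such pair. Using the bijection $n\mapsto -n$ of $G$ (which preserves $\{n:2n=0\}$) to rewrite $\binom{d_{-n}+1}{2}=\tfrac12 d_n(d_n+1)$, and summing the unitary contributions over a complete set of representatives of the pairs, one recovers exactly $\sum_{2n\neq0}d_nd_{-n}+\tfrac12\sum_{2n=0}d_n(d_n+1)$.

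The hard part will be the first step: carefully justifying $T_{[J]}S(G)=(\sym^2 H^{0,1}(C))^G$, i.e. that $S(G)$ is cut out inside $A_g$ precisely by the $G$-equivariance condition and is smooth at $[J]$ with this tangent space. A clean way to do this --- and the alternative I would fall back on --- is to write the Shimura datum of $S(G)$ explicitly: isogeny-decompose $J$ along the primitive idempotents of $\Q[G]$, so that the connected generic Mumford--Tate group becomes, modulo its centre, a product of unitary groups $\mathrm{Res}_{F^+/\Q}\,\UU(V_\chi)$ over the non-self-conjugate rational orbits of characters together with symplectic groups $\Sp(V_\chi)$ over the characters with $\chi^2=1$; then $\dim S(G)$ is the sum of the dimensions of the associated symmetric domains, and one invokes the standard values $\dim_\C \UU(p,q)/(\UU(p)\times\UU(q))=pq$ (yielding the terms $d_nd_{-n}$, one per conjugate pair with $2n\neq0$) and $\dim_\C \mathfrak H_r=\tfrac12 r(r+1)$ for the Siegel factors (yielding $\tfrac12 d_n(d_n+1)$ for each $n$ with $2n=0$). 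Once this structural input is in place, the remainder is just matching eigenspace characters to the dichotomy $2n=0$ versus $2n\neq0$ and keeping track of the involution $n\leftrightarrow-n$, which is routine.
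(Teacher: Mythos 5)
The paper gives no proof of this lemma, only the citation to \cite{MZ}, Lemma 2.6, and your argument is essentially the standard one used there (and in \cite{M} and \cite{FGP}, \S 3.2): identify $T_{[J]}S(G)$ with $\bigl(\sym^{2}H^{0,1}(C)\bigr)^{G}$ and count the trivial isotypic piece eigenspace by eigenspace, which is also consistent with this paper's Construction~\ref{smallestshimura} via the $\psu(d_n,d_{-n})$ and $\psp_{2d_n}$ factors. Your reading of the first sum as running over unordered pairs $\{n,-n\}$ is the intended one (it is what makes the paper's example $(6,(1,1,2,2,3,3))$ give $\dim S(G)=5$), so the computation is correct as you set it up.
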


\begin{remark}
In \cite{MZ}, Table 1 some examples of special families of
abelian covers are listed. This table appears also in \cite{MO} and \cite{FGP}. The latter contains more examples of special
families of curves even of non-abelian Galois covers of the projective line. As explained above, similar conditions 
are formulated ($\dim Z(D_G)=s-3$) and it is checked, using a computer program, which families satisfy this numerical condition.
Hence all of the special families satisfy the equality $\dim Z=\dim S(G)=s-3$. 
However, if $\dim S(G)>s-3$ one can not a priori imply that the family is not a spceial family.
It could very well be that the family gives rise to a smaller special subvariety. In \cite{M}, it is shown
that for families of cyclic covers, this is not the case. In \cite{MZ} we have also shown that in the case of 
families of abelian covers over curves (i.e., with $s=4$ branch points) this actually does not happen. However this proof does not hold when $s>4$. 
We will show in Theorem~\ref{mainthm1}  that under some additional conditions, this can imply that the family is not special. 
\end{remark}

However, as the following example shows, it could happen that the Torelli image $Z$ is not a special
variety but it contains one.

\begin{example} Consider the family $f:C\rightarrow T$ of
cyclic covers of $\mathbb{P}^{1}$ of genus 7 given by the ramification data
$(12,(4,6,7,7))$. This is a Special family which appears as family (20) of \cite{MO}, Table 1. Take a $t \in T$. The fiber $C_{t}$ is a
cyclic Galois cover of $\mathbb{P}^{1}$. The Galois group of
$C_{t}\rightarrow \mathbb{P}^{1}$ is $\mathbb{Z}/12\mathbb{Z}$
which contains a normal subgroup $H\cong\mathbb{Z}/6\mathbb{Z}$.
Now this cover factors as
\[C_{t}\rightarrow X_{t}\rightarrow \mathbb{P}^{1}\]
The quotient cover $C_{t}\rightarrow X_{t}$ is a Galois cover with
Galois group $H\cong\mathbb{Z}/6\mathbb{Z}$. We have $X_{t}\cong
\mathbb{P}^{1}$ as the quotient corresponds to the equation
$y^{2}=(x-x_{1})(x-x_{2})$. The cover $X_{t}\rightarrow
\mathbb{P}^{1}$ has Galois group $\mathbb{Z}/2\mathbb{Z}$ and so
has degree $2$ and is branched above $2$ points. The cover $C_{t}\rightarrow X_{t}(\cong \mathbb{P}^{1})$ is
ramified at 6 points with ramification indices $1,1,2,2,3,3,$.
This shows that our original family  $f:C\rightarrow T$ is a
1-dimensional subfamily of the $3$-dimensional family of cyclic
covers given by the ramification data $(6,(1,1,2,2,3,3))$. It is
quite easy to see from the Moonen's list that this family is not a
Shimura family. Indeed, for this family $\dim S(G)=5\neq 3$,
so by the results of Moonen, this family is not Shimura. Note that the subfamily here is given over the closed subset (of
$(\mathbb{P}^{1})^{6}\setminus \widetilde{\Delta}$) given by a set
$\Gamma$ of orbits of the $\mathbb{Z}/2\mathbb{Z}$-action on
$\mathbb{P}^{1}$.\\ 

Therefore we have found a $3$-dimensional non-special family of
cyclic covers which contains a $1$-dimensional special subvariety.\\

Similarly one finds that:\\

$\bullet$ The family given by the ramification data
$(2,(1,1,1,1,1,1,1,1))$ (the universal family of hyperelliptic
curves of genus $g=3$) which is \emph{not} a Shimura family (cf.
\cite{M}) has a subfamily isomorphic to the family $(4,(1,1,2,2,2))$.
So in this example $Z$ contains a Shimura surface.

\

\

$\bullet$ The family $(4,(1,1,1,1,2,2))$ (a family of cyclic
covers of fiber genus $g=5$) has a subfamily isomorphic to the
family $(8,(5,5,4,2))$. In this case $\dim Z=3$ and $Z$ contains a
Shimura curve.

\end{example}

\section{The variety $S_f$}
In this section we construct a second special subvariety containing the moduli variety $Z$
associated to a given family $f\colon C\to T$ of abelian covers of $\P^1$. First we need to have 
some fundamental definitions and results to understand this construction. The definitons will be given 
in the more general setting of families of manifolds and their variations of Hodge structures and will
subsequently be applied to families of curves.

\begin{definition}
 Let $(V,h)$ be a $\Q$-Hodge structure given by a homomorphism $h\colon \mathbb{S}\to \gl(V)$. The \emph{Mumford-Tate group} of $(V,h)$ is the smallest $\Q$-algebraic subgroup of $\gl(V)$ containing $h(\mathbb{S})$. For more details
 on the representation theoretic aspect of Hodge structures, see \cite{D1}. 
\end{definition}

The following definition generalizes the above definition to families of Hodge structures over smooth non-singular algebraic varieties.

\begin{definition} \label{genericmum}
 Let $f\colon X\to S$ be a family of compact K\"ahler manifolds with $S$ a connected complex manifold and $\V=R^1f_*\Q$ 
 the associated polarized variation of $\Q$-Hodge structures. Then there is a dense subset $S^{\circ}$, such that
 the Mumford-Tate groups of all Hodge structures in $\V$ coincide, see for instance \cite{A} and \cite{D2}. In other words for all $s,s^{\prime}\in S^{\circ}$, 
 $\mt(\V_s)=\mt(\V_{s^{\prime}}):=M$. If $t\in S\setminus S^{\circ}$, then $\mt(\V_t)\subsetneq M$. We call the 
 group $M=\mt(\V_s)$ for $s\in S^{\circ}$, the \emph{generic Mumford-Tate group} of the family. The points 
 in $S^{\circ}$ are called the \emph{Hodge-generic points}. 
\end{definition}

We remark that it has been shown in \cite{CDK} that if $S$ is a non-singular algebraic variety, then $S\setminus S^{\circ}$ is a countable union of proper algebraic subvarieties.

For the following construction, note that if $G$ is an algebraic group, then $G^{ad}$ is the quotient of $G$ obtained
by the adjoint representation of $G$ on its Lie algebra $\mathfrak{g}$. If $G$ is a connected algebraic group defined over $\Q$, 
then the adjoint group $G^{ad}$ is isomorphic to the group $G/Z(G)$, where $Z(G)$ is the center of $G$.

\begin{constr} \label{smallestshimura} For the second special subvariety which contains $Z$, let $M$ be the generic Mumford-Tate group of the family $f:C\rightarrow T$. The group $M$ is a reductive $\mathbb{Q}$-algebraic group. Let $S_{f}$ be the natural
Shimura variety associated to $M$. In general
we have $S_{f}\subseteq S(G)$. The Shimura subvariety $S_{f}$
is in fact the \emph{smallest} special subvariety that contains
$Z$ and its dimension depends on the real adjoint group
$M^{ad}_{\mathbb{R}}$. Explicitly, if $M^{ad}_{\mathbb{R}}=
Q_{1}\times...\times Q_{r}$ is the decomposition of
$M^{ad}_{\mathbb{R}}$ to $\mathbb{R}$-simple groups then $\dim
S_{f}= \sum \delta(Q_{i})$. The numbers $\delta(Q_{i})$ are defined as follows: If $Q_{i}(\mathbb{R})$ is
not compact, $\delta(Q_{i})$ is the dimension  of the corresponding symmetric space associated to the real 
group $Q_{i}$ which can be read from Table V of \cite{H}. If $Q_{i}(\mathbb{R})$ is compact, i.e.,
if $Q_{i}$ is anisotropic we set $\delta(Q_{i})=0$. We remark that
for $Q=\psu(p,q)$, $\delta(Q)=pq$ and for $Q=\psp_{2p}$,
$\delta(Q)=\frac {p(p+1)}{2}$. Note also that $Z$ is a Shimura
subvariety if and only if $\sum\delta(Q_{i})=s-3$, i.e., if and
only if $\dim Z=\dim S_{f}=s-3$.
\end{constr}

The following remark, proved originally in \cite{A}, will be useful in the proofs of the main results.
\begin{remark} \label{monodromydecomp}
 If a family of curves $f:C\rightarrow T$ gives rise
 to a Shimura subvariety in $A_{g}$ then the connected monodromy
 group $\mon^{0}$ is a normal subgroup of the generic Mumford-Tate
 group $M$. In fact in this case $\mon^{0}=M^{der}$. Consequently, if $f$ gives rise to a Shimura subvariety and $M^{ad}_{\mathbb{R}}=\prod_{1}^{l} Q_{i}$ as a product of simple
 Lie groups then $\mon^{0,ad}_{\mathbb{R}}=\prod_{1}^{l} Q_{i}$.
 \end{remark}

\subsection{Monodromy of families of abelian covers}

Let $f:C\rightarrow T$ be a family of
 abelian Galois covers of $\mathbb{P}^{1}$ as constructed in
 section $2$. Then the local system
 $\mathcal{L}=R^{1}f_{*}\mathbb{C}$ gives rise to a polarized
 variation of Hodge structures (PVHS) of weight $1$. We have the monodromy representation $\pi_{1}(T,x)\to \gl(V)$, where
 $V$ is the fiber of $\mathcal{L}$ at $x$. The Zariski closure of the image of this morphism is
 called the \emph{monodromy group} of $\mathcal{L}$. Let $\mon^{0}(\mathcal{L})$ be the
 identity component of this group. The
 PVHS decomposes according to the action of the abelian Galois
 group $G$ and the eigenspaces $\mathcal{L}_{i}$ (or
 $\mathcal{L}_{\chi}$ where $i\in G$ corresponds to character $\chi
 \in \mu_{G}$ by Remark~\ref{abeliangroupcharacter}) are again variations of Hodge
 structures. For $t\in T$ suppose that $h^{1,0}((\mathcal{L}_{i})_{t})=a$ and
 $h^{0,1}((\mathcal{L}_{i})_{t})=b$. Since the monodromy group respects
 the polarization, $(\mathcal{L}_{i})_{t}$ is equipped with a Hermitian form of
 signature $(a,b)$ (see \cite{DM}, 2.21 and 2.23). This implies that
 $\mon^{0}(\mathcal{L}_{i}) \subseteq U(a,b)$. 
 
 \begin{definition} \label{differenttypes}
 With the notation as above, we say that $\mathcal{L}_{i}$ is \emph{of type} $(a,b)$. Two eigenspaces
 $\mathcal{L}_{i}$ and $\mathcal{L}_{j}$ of types $(a,b)$ and
 $(a^{\prime},b^{\prime})$ are said to be \emph{of distinct types} if $\{a,b\}\neq \{a^{\prime},b^{\prime}\}$.
  \end{definition}
 
 The above observations are key to our further analysis. Let us
 mention a lemma for whose proof we refer to \cite{MZ}, Prop 6.4 and 
 is a generalization of \cite{M}, Prop.4.7 to the setting of abelian covers.
 
 \begin{lemma} \label{monodromyeigspace}
Let $\mathcal{L}_{i}$ be an eigenspace as
 discussed above of type $(a,b)$ with $a,b\geq 1$. Then
 $\mon^{0}(\mathcal{L}_{i})=\su(a,b)$ unless $|G|=2l$ is even
 and $i$ is of order $2$ in $G$, in which case there is a
 surjection from $\mon^{0}(\mathcal{L}_{i})$ to $\su(n,n)=Sp_{2n}$,
 where $n=d_{i}$.
 \end{lemma}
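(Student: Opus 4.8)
\textbf{Proof proposal for Lemma~\ref{monodromyeigspace}.}

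The plan is to exploit the rigidity of the local system $\sL_i$ coming from its explicit description in Remark~\ref{eigenspacedim}: it is (the variation of Hodge structure attached to) a hypergeometric-type local system on $\P^1$ minus $s$ points, whose local monodromies are the scalars $\chi(l_j) = e^{2\pi i\alpha_j/N}$ acting on a rank-one summand near each $z_j$ after taking into account the eigenspace. Concretely, over the configuration space $T$, the fibrewise local system on $\P^1\setminus\{z_1,\dots,z_s\}$ in the character-$i$ eigenspace has rank $d_i + d_{-i}$ (by Proposition~\ref{dncomputation}), and its monodromy generators are pseudo-reflections (the local monodromy around each branch point is a complex reflection of determinant $\chi(l_j)$, since only the differentials involving $z_j$ acquire a nontrivial twist). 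First I would invoke the classical theorem (Levelt, or the Deligne--Mostow description, see \cite{DM}) that a local system on a punctured $\P^1$ generated by pseudo-reflections with the stated determinants is irreducible and its monodromy group is as large as the constraints from the invariant Hermitian form allow; this gives that the monodromy of $\sL_i$ at a single point $t$, hence a fortiori $\mon^0(\sL_i)$, contains (a conjugate of) the full $\su(a,b)$ or $\sp_{2n}$ as appropriate.

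The key steps, in order, are: (1) Record that $\sL_i$ is irreducible as a local system on $T$ — this follows because the braid group $\pi_1(T)$ acts transitively enough on the reflection generators; alternatively cite \cite{MZ}, Prop.~6.4 which already established the reducibility/irreducibility dichotomy. (2) Observe that $\mon^0(\sL_i)$ is a connected semisimple (indeed reductive with finite-index-determined) subgroup of $\UU(a,b)$ preserving no proper sub-VHS, and that it is generated topologically by the local monodromy transformations and their conjugates, each of which is a pseudo-reflection. (3) Apply the classification of irreducible subgroups of $\gl_n(\C)$ generated by pseudo-reflections and preserving a Hermitian form: such a group is either the full special unitary group $\su(a,b)$, or — when the Hermitian form is forced to be (anti)symmetric, which happens exactly when $i=-i$ in $G$, i.e. $i$ has order $\le 2$ — a symplectic group $\sp_{2n}$ with $n=d_i=a=b$, with the image landing in $\su(n,n)\cong\sp_{2n}(\R)$ up to the surjection coming from the isogeny. (4) Handle the order-two case separately: when $|G|=2l$ and $2i=0$, the character $i$ is real-valued, so complex conjugation on $\sL$ preserves $\sL_i$, forcing the Hermitian form to come from a $\Q$-bilinear alternating form; then step (3) yields the surjection onto $\sp_{2n}$ rather than onto $\su(a,b)$ with $a\ne b$ in general — note here $a=b=n=d_i$ automatically.

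The main obstacle I anticipate is step (3): ruling out the \emph{intermediate} possibilities — proper irreducible reflection subgroups of $\su(a,b)$, such as finite complex reflection groups in the Shephard--Todd list, or small-rank exceptional imprimitive series. For the generic/large-$s$ regime these are excluded by a dimension count (the reflection group would have to be finite or of bounded order, contradicting that the image of a genus-growing family cannot be finite), but to get a uniform statement one must check that the local exponents $t_j(\pm n)$ are ``in general position'' modulo $1$, which is where the hypothesis excluding the order-two anomaly enters, and where I would lean on the precise numerical criteria of \cite{DM}, \S\S3--4 (the $\mathrm{INT}$/$\Sigma\mathrm{INT}$ conditions and the associated monodromy computations) together with \cite{MZ}, Prop.~6.4 and \cite{M}, Prop.~4.7, which already carry out this verification in the cyclic case. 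Once irreducibility plus ``generated by pseudo-reflections of infinite order'' is in hand, Zariski-density of $\su(a,b)$ (resp. surjectivity onto $\sp_{2n}$) is essentially forced, and the lemma follows.
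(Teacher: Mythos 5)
The paper does not prove this lemma in situ: it defers to \cite{MZ}, Prop.~6.4, which generalizes \cite{M}, Prop.~4.7, and that proof runs by induction on the number of branch points $s$ --- one degenerates the family by letting two branch points collide, so that the eigenspace monodromy of an $(s-1)$-point family (with the local exponents added) sits inside $\mon^{0}(\sL_i)$, and then one applies a purely group-theoretic lemma to the effect that a connected algebraic subgroup of $\su(a,b)$ containing suitably embedded copies of the smaller unitary (resp.\ symplectic) groups must be the whole group. Your route --- identify $\sL_i$ with a Deligne--Mostow local system on the configuration space, note that the braid generators act by complex reflections, and then classify irreducible reflection subgroups --- is a genuinely different strategy, and the reduction of the abelian case to a single character (hence to a cyclic-cover eigenspace determined by the tuple $(\alpha_1,\dots,\alpha_s)$) is sound.

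However, your step (3) is exactly the content of the lemma and you have no theorem to cite for it. There is no off-the-shelf classification of \emph{infinite} irreducible subgroups of $\gl_n(\C)$ generated by pseudo-reflections and preserving an indefinite Hermitian form whose output is only $\su(a,b)$ and $Sp_{2n}$: Shephard--Todd covers finite groups only, and the $\mathrm{INT}$/$\Sigma\mathrm{INT}$ conditions of \cite{DM} govern discreteness (lattice-ness) of the monodromy, not Zariski density --- Deligne--Mostow nowhere assert that the Zariski closure is the full unitary group, which is the statement you need. Your fallback of invoking \cite{MZ}, Prop.~6.4 for irreducibility and for excluding intermediate groups is circular, since that proposition \emph{is} the lemma being proved; likewise ``the braid group acts transitively enough on the reflection generators'' is an assertion, not an argument. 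Finally, your remark that ``the hypothesis excluding the order-two anomaly enters'' misreads the statement: there is no such hypothesis --- the order-two case is one of the two alternatives in the conclusion (the form becomes real symplectic because the character is real-valued, as you correctly note in step (4)), not an excluded configuration. To close the gap you would need an actual Zariski-density criterion for complex reflection monodromy groups, or else abandon this route for the degeneration induction of \cite{M}.
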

 
 \begin{example}Consider the family (25) in Table 2 of \cite{MO}, of a special family of abelian covers of $\P^1$ of genus 4 with Galois 
 group $(\Z/3)^2$ ramified over 4 points. As this is a special family, one has by Lemma~\ref{monodromydecomp} that $M^{der}=\mon^0(\V)$ for $M$ the generic Mumford-Tate group. In this case the monodromy
 group is isomorphic to $\su(1,1)$.  By Remark~\ref{monodromyeigspace}, the non-compect factor $\su(1,1)$ corresponds to the eigenspace of $(1,2)$. 
  Note the natural projection
 $p_1\colon \mon^{0,ad}(\V)\to \mon^{0,ad}(\mathcal{L}_{(1,2)})$
 gives that the group $\mon^{0,ad}(\V)$ is the direct product of the kernel of this morphism and a 
 semisimple group isomorphic to $\mon^{0,ad}(\mathcal{L}_{(1,2)})$ (if $H:= H_1\times\cdots \times H_n$ 
 is a product of connected simple groups and $B$ is a normal connected subgroup of $H$, then $B=G_1\times\cdots \times G_r\times \{0\}\times \cdots \times \{0\}$ for some $r\leq n$ for a suitable numbering).
 This implies that in particular the group $\mon^{0,ad}(\V)$ decomposes as $\displaystyle \prod_ j \mon^{0,ad}(\mathcal{L}_j)$ of all eigenspaces, for if this is not the case, then according to the above 
 direct product decomposition, there is another
 $j^{\prime}$ such that $\mon^{0,ad}(\mathcal{L}_{(1,2)})$ is not contained in the kernel of the natural projection onto $\mon^{0,ad}(\mathcal{L}_{j^{\prime}})$. Note that
 every simple direct factor of $\mon^{0,ad}(\V)$ projects isomorphically to some $\mon^{0,ad}(\mathcal{L}_{j^{\prime}})$. However since 
 any other $\mon^{0,ad}(\mathcal{L}_{j^{\prime}})$ for $j^{\prime}\neq (1,2)$ is trivial and there can not exist more than one $\psu(1,1)$ in the decomposition
 (as this will imply that the family ist not special, see the next remark) this can not be the case. In particular this decomposition implies that $M^{ad}\cong \psu(1,1)$.
 The group $\su(1,1)$ acts on $\P^1_{\C}$ and its maximal torus $T$ is the subgroup of diagonal matrices so that $\su(1,1)/T$
 is isomorphic to the open unit disk which is therefore the homogeneous space for this group and hence $\delta(\psu(1,1))=1$.

 As another example, the generic Mumford-Tate group of the universal family of hyperelliptic curves of degree $m$ is isomorphic to $\gsp(\V_{m/2},\Q_{m/2})$, see \cite{R}, \S 5.5. 
 \end{example}

 \begin{remark} \label{largenotshimura}
 Assume that $C\rightarrow T$ is a family of
 curves and let $M$ be the generic Mumford-Tate group of this
 family. Recall from Construction~\ref{smallestshimura}, that there is a natural
 Shimura variety $S_{f}=\sh(M,C)$ associated to $M$ (which is a
 reductive group) and the dimension of $S_{f}$ only depends on
 $M^{ad}_{\mathbb{R}}$. The Shimura datum comes from the Hodge
 structures of the fibers in the family. This Shimura variety is
 the smallest Shimura subvariety in $A_{g}$ which contains $Z$. Our
 purpose is to show that for families of abelian covers with a
 large $s$, the moduli variety $Z$ is not a Shimura subvariety.
 This is equivalent to $\sum\delta(Q_{i})>s-3$. Here
 $\delta(Q_{i})$ is as in Construction~\ref{smallestshimura}. Lemma~\ref{monodromyeigspace} enables
 us to compute the connected algebraic monodromy group
 $\mon^{0}(\mathcal{L}_{i})$ of an eigenspace $\mathcal{L}_{i}$.
 According to Remark~\ref{monodromydecomp} above, if $M^{ad}_{\mathbb{R}}=\prod_{1}^{l} Q_{i}$ is the decomposition
 into $\mathbb{R}$-simple factors, then $\mon^{0,ad}_{\mathbb{R}}=\prod_{1}^{l} Q_{i}$ as well. We need to find eigenspaces
 $\mathcal{L}_{j_i}$ of distinct types $\{a_i,b_i\}$
 with $a_i$ and $b_i$ large enough in the sense described below.  Concretely, if we can find eigenspaces $\mathcal{L}_{j_i}$ as above, 
 then these eigenspaces, being of different types $\{a_i,b_i\}$, give rise to non-isomorphic $Q_i=\mon^{0,ad}_{\mathbb{R}}(\mathcal{L}_{j_i})=\psu(a_i,b_i)$ for
 $i\in K$ in the above decomposition of $\mon^{0,ad}_{\mathbb{R}}$ and if for
 $\delta(\mathcal{L}_{j_i})=\delta(\mon^{0,ad}_{\mathbb{R}}(\mathcal{L}_{j_i}))$, we have that 
 $\sum \delta(\mathcal{L}_{j_i})>s-3$ (this is what we mean by \emph{large enough} in the above, note that $\delta(\mathcal{L}_{j_i})$ depends in our 
 examples only on $a_i$ and $b_i$; see Construction ~\ref{smallestshimura}), then
 $\dim S_f\geq \sum \delta(\mathcal{L}_{j_i})>s-3$.  
 \end{remark}

 The following lemma shows that if $s$ is large, then the candidates for Shimura families are those that
contain zeros in each row.

\begin{lemma} \label{zeroinrow}
 Consider a family $f\colon C\to T$ of abelian covers of $\P^1$ with the associated $m\times s$ matrix $A$ as in section 2.  Note that each row corresponds to a family of cyclic covers of the projective line. 
 Suppose that there is one row of $A$ whose entries are all non-zero. If the family of cyclic coverings arising from this row
 is not a special family (of cyclic covers) then the family $f\colon C\to T$ is not a special family too. If the family of cyclic coverings arising from the row is special, then $s\leq 6$.
\end{lemma}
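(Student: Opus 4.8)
The plan is to exploit the relation between the monodromy of the whole abelian cover and the monodromy of the cyclic subcover cut out by a single row. Fix a row, say the $k$-th, all of whose entries $\widetilde r_{k1},\dots,\widetilde r_{ks}$ are non-zero in $\Z/N$; denote by $N_k$ the order of the cyclic group it generates and by $C'\to T$ the associated family of cyclic covers of $\P^1$ branched at all $s$ points. The character group of the cyclic quotient embeds into $\mu_G$, so each eigenspace $\mathcal L'_\chi$ of the variation $R^1f'_*\C$ for the cyclic family occurs as one of the eigenspaces $\mathcal L_i$ of the abelian family $f$. First I would record, via Proposition~\ref{dncomputation} and Remark~\ref{eigenspacedim}, that because the row has no zero entry, the associated tuple of a generic character has all $\alpha_j\neq 0$, so there is an eigenspace $\mathcal L'_\chi$ of some type $(a,b)$ with $a,b\geq 1$; in fact the standard count (as in \cite{M}, \cite{MZ}) gives eigenspaces whose $\delta$-values sum to $\dim S(G')$ for the cyclic family.

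The first assertion then follows from the containment $S_f\subseteq S_{f'}$-type reasoning reversed: more precisely, I would argue that if $C'\to T'$ is not special, then by Remark~\ref{largenotshimura} applied to the cyclic family one has $\dim S_{f'}>s-3$, witnessed by eigenspaces $\mathcal L'_{j_i}$ of distinct types with $\sum\delta(\mathcal L'_{j_i})>s-3$. Since each of these eigenspaces is also an eigenspace $\mathcal L_{j_i}$ of the abelian family (with the same Hodge numbers, hence the same type, since the Hodge decomposition is compatible with pullback along the quotient $C\to C'$), the same eigenspaces witness, via Lemma~\ref{monodromyeigspace} and Remark~\ref{monodromydecomp}, that $\dim S_f\geq\sum\delta(\mathcal L_{j_i})>s-3$. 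Hence $\dim Z=s-3<\dim S_f$ and $Z$ is not special. The key technical point here is the compatibility of the eigenspace Hodge structures under the intermediate quotient, which is immediate from functoriality of $\pi_*$ for the sheaves $\sO,\omega,\underline\C$ applied to $C\to C'\to\P^1$, and from the fact that the $G$-action restricts to the cyclic-quotient action on the relevant characters.

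For the second assertion, suppose the cyclic family from the chosen row \emph{is} special. Then by the classification of special families of cyclic covers — Moonen's list (\cite{M}, \cite{MO}, \cite{FGP}) — a special cyclic family with all $s$ branch points genuinely ramified, i.e. with no zero in its ramification datum, has $s\leq 6$: one reads off the list that every entry in the special families with $s\geq 7$ forces a large monodromy via the eigenspace-dimension count of \cite{MZ}, and the finitely many special cyclic families all have at most $6$ (in fact the ones with $s=7,8$ on the list, such as $(2,(1,\dots,1))$, correspond to hyperelliptic-type data where the "cyclic cover" statement degenerates). So I would either cite the explicit list directly or, to be self-contained, re-run the argument of \cite{MZ}: for a cyclic datum of length $s\geq 7$ with all $\alpha_j\neq 0$, Proposition~\ref{dncomputation} produces at least two eigenspaces of distinct types with $a_i,b_i\geq 1$ and $\sum\delta>s-3$, contradicting speciality.

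The main obstacle I expect is the second part: making the implication ``special cyclic row $\Rightarrow s\leq 6$'' rigorous without simply quoting the tables. The honest route is a direct estimate — for a cyclic cover $w^N=\prod_{j=1}^s(z-z_j)^{a_j}$ with all $a_j\not\equiv 0$, show $\sum_n\delta(\mathcal L_n)>s-3$ once $s\geq 7$ by bounding $d_n+d_{-n}$ from below using $\langle -\alpha_j/N\rangle+\langle\alpha_j/N\rangle=1$ for $\alpha_j\neq0$, which gives $d_n+d_{-n}=s-2$ for every $n$ with $2n\neq 0$ and $d_n\geq\lfloor (s-2)/2\rfloor$ for $2n=0$, and then comparing with Lemma~\ref{dimS(G)}; the combinatorics is routine but must be done carefully to nail the boundary case $s=7$. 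I would present the reduction to the cyclic case in full and then invoke the cyclic classification, flagging that the self-contained estimate is available as in \cite{MZ}.
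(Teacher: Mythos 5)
Your overall strategy for the first assertion --- transfer non-speciality from the cyclic sub-family cut out by the row to the full abelian family --- is the right one, but the step you use to implement it has a gap. You assert that if the cyclic family is not special, then this is ``witnessed by eigenspaces $\mathcal{L}'_{j_i}$ of distinct types with $\sum\delta(\mathcal{L}'_{j_i})>s-3$.'' That is the \emph{converse} of the sufficient criterion in Remark~\ref{largenotshimura}, and it is not established anywhere (nor is it clear): non-speciality only says that $\dim S_{f'}=\sum_i\delta(Q_i)>s-3$ summed over \emph{all} $\R$-simple factors $Q_i$ of $M_{1,\R}^{ad}$, and these factors need not be detected by eigenspaces of pairwise distinct types. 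For instance, two simple factors could correspond to two eigenspaces of the same type $(a,b)$; the distinct-types count then yields only the lower bound $\delta(\psu(a,b))$, which may well be $\leq s-3$ even though $\dim S_{f'}>s-3$. The paper's proof avoids this entirely: the row defines a sub-variation of Hodge structures of $R^1f_*\Q$, and by \cite{VZ} the generic Mumford--Tate group $M_1$ of that sub-VHS is a quotient of $M$; hence every simple factor of $M_1^{ad}$ occurs among the simple factors of $M^{ad}$, so $\dim S_f\geq\sum_i\delta(Q_i)=\dim S_{f'}>s-3$ directly, with no need to realize the $Q_i$ via eigenspaces. You gesture at this (the ``$S_f\subseteq S_{f'}$-type reasoning reversed'' and the compatibility of the sub-VHS under $C\to C'\to\P^1$), so the repair is short, but as written the argument is incomplete.

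For the second assertion your proof matches the paper's: since the row has no zero entries, the associated cyclic family is branched at exactly $s$ points, and Moonen's classification \cite{M} of special families of cyclic covers gives $s\leq 6$; the paper simply cites this result and does not re-derive it. Your proposed self-contained estimate is unnecessary, and as stated it is slightly off: the identity $d_n+d_{-n}=s-2$ holds only for those $n$ with $n\alpha_j\not\equiv 0\ (\mathrm{mod}\ N)$ for all $j$, not merely $\alpha_j\neq 0$, so it would require the extra gcd bookkeeping you allude to.
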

\begin{proof}
 If this family of cyclic covers given by the mentioned row is not a Shimura family
 then by the above notations and observations there are
 $\mathbb{R}$-simple factors $Q_{i}$ in the decomposition of
 $M^{ad}_{1, \mathbb{R}}$ such that $\sum\delta(Q_{i})>s-3$. Here
 $M_{1}$ is the generic Mumford-Tate group associated to the
 family of cyclic covers given by the mentioned row. However, note that the row gives a sub-variation of Hodge structures of the 
 family and we know from \cite{VZ} that $M_{1}$ is then a quotient of $M$, the generic
 Mumford-Tate group of our family $C/T$, and therefore
 the factors $Q_{i}$ also occur in the decomposition of $M^{ad}$
 (note that $M^{ad}$ is a semi-simple group with trivial center)
 and so $\dim S_{f}\geq \sum\delta(Q_{i})>s-3$, i.e., the family is
 not a Shimura family. On the other hand, by results of \cite{M}, if a family of cyclic covers of $\mathbb{P}^{1}$ gives rise
 to a Shimura subvariety, then $s\leq 6$.
\end{proof}
\section{Reduction mod $p$}
Let $f:C\rightarrow T$ be a family of smooth projective
curves with an irreducible base scheme $T$. We denote the sheaf
of relative differentials with $\omega_{C/T}$ and the Hodge
bundle $\mathbb{E}=\mathbb{E}(C/T)=f_{*}\omega_{C/T}$. Consider
the Kodaira-Spencer map $\kappa: \sym^{2}(\mathbb{E})\rightarrow
\Omega^{1}_{T}$ (usually the dual of this map is defined as the Kodaira-spencer map, however since we mainly need this map, rather than
the original one, we name it the Kodaira-spencer map). The
multiplication of forms $\mult:\sym^{2}(\mathbb{E})\rightarrow f_{*}(\omega^{\otimes2}_{C/T})$
induces the sheaf
$\mathcal{K}=\ker(\mult)=\ker(\sym^{2}(\mathbb{E})\rightarrow f_{*}(\omega^{\otimes2}_{C/T}))$.
If the fibers are not hyperelliptic, $\mult$ is surjective and $\mathcal{K}$ is dual to $\coker(\mult)$.
 
 \

 The curves and their families which we introduced up to now 
 have only been defined over $\mathbb{C}$. The use of characteristic $p$ tools in studying the Shimura subvarieties 
 of $A_g$ was initiated in \cite{DN} by applying constructions of Dwork and Ogus and later was also used in \cite{M} and \cite{MZ}. In order to be able to use
 these characteristic $p$ tools, we need to set the scene in such a way that the reduction mod $p$ makes sense and in order to do this we need first the following
 fundamental definiton.
 
\begin{definition}
Let $k$ be an algebraically closed field of characteristic $p>0$. Let $A$ be an abelian variety over $k$. 
Write $A[p]=\ker (p\cdotp 1_{A}:A\to A)$. The abelian variety $A$ is called \emph{ordinary} if $A[p](k)\cong (\Z/p)^g$ for $g=\dim(A)$. 
A smooth curve $X$ over an algebraically closed field of characteristic $p>0$ is called ordinary if its Jacobian is an ordinary abelian
variety.
\end{definition}
For example the Fermat curve of degree 5 is an ordinary curve of genus 6 in characteristic $p\equiv 1$ (mod 5), see \cite{DO}, p. 130. Many other examples of ordinary curves
have been constructed by Oort and Sekiguchi in \cite{OS} using Galois covers of $\P^1$. \\

 Let $f:C\rightarrow T$ be a family of abelian covers as in section
 2. Let $R=\mathbb{Z}[1/N,u]/\Phi_N$, where $\Phi_N$ is the $N$th
 cyclotomic polynomial. Note that $R$ can be embedded into
 $\mathbb{C}$ by sending the image of $u$ to $\exp (2\pi i/N)$. We
 consider $T\subset (\mathbb{A}^1_R)^s$ as the complement of the
 big diagonals, i.e., as the $R$-scheme of ordered $s$-tuples of
 distinct points in $\mathbb{A}^1_R$. For a prime number $p$, we
 denote by $\wp$ a prime ideal of $R$ lying above $p$ (meaning that $\wp\cap \Z=p\Z$). One can choose a
 prime number $p\equiv 1$ (mod $N$) and an open subset $U$ of
 $T\otimes \mathbb{F}_{p}\cong T\otimes_R R/\wp$ such that for all
 $t\in U$, the fibers are ordinary curves in characteristic $p$. For such $p$ and $U$, consider the restricted family $C_{U}\rightarrow U$.
 The abelian group $G$ also acts on the sheaves
 $\mathcal{L}(C_{U}/U)$ and gives the eigensheaf decomposition
 $\mathcal{L}(C_{U}/U)=\oplus_{n\in G}\mathcal{L}_{(n)}$. The same
 is true for $\mathbb{E}_{U}=\mathbb{E}(C_{U}/U)$ and
 $\mathcal{K}_{U}=\mathcal{K}(C_{U}/U)$.

 \

 From now on we just work with the restricted family $C_{U}/U$
 whose fibers are all ordinary instead of $C/T$ and denote it
 simply as $C/U$. Let $F_{U}:U\rightarrow U$ denote the absolute Frobenius map. In the following result, the superscript $G$
 denotes the $G$-invariant sections. 
\begin{proposition} \label{exactsequencevanish}

If the family of abelian covers gives rise to a
Shimura subvariety in $A_{g}$, then the map
\[F^{*}_{U}\mathcal{K}^G\hookrightarrow
F^{*}_{U}\sym^{2}(\mathbb{E}_{U})^G\xrightarrow{\sym^{2}(\gamma)}
\sym^{2}(\mathbb{E}_{U})^G\xrightarrow{\mult^G}
f_{*}(\omega_{C/U}^{\otimes2})^G\]
vanishes identically.
\end{proposition}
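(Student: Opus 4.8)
The plan is to use that every map in the displayed diagram is $G$-equivariant, reduce the assertion to a statement about a single sub-bundle, feed in the Shimura hypothesis through the total geodesy of special subvarieties, and finally transport that input to characteristic $p$ via the Dwork--Ogus description of the Cartier operator; the method follows \cite{DN} and \cite{M}.

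First I would reduce to eigensheaves. Recall that $\gamma\colon F_U^{*}\mathbb{E}_U\xrightarrow{\ \sim\ }\mathbb{E}_U$ is the canonical isomorphism attached to the ordinary family $C/U$ (the de Rham/Cartier Frobenius descent of Dwork--Ogus, cf. \cite{DO}), and that it is $G$-equivariant, as are $\mult$, $\mathcal{K}=\ker(\mult)$ and the eigensheaf decompositions. Since $p\equiv 1\pmod N$ we have $p\nmid|G|$, so taking $G$-invariants is exact; in particular $\mult^G$ is still surjective and $0\to\mathcal{K}^G\to\sym^2(\mathbb{E}_U)^G\xrightarrow{\mult^G}f_{*}(\omega_{C/U}^{\otimes 2})^G\to 0$ is exact. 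Because $\sym^2(\gamma)$ is a $G$-equivariant isomorphism, the displayed composite vanishes if and only if $\sym^2(\gamma)$ carries $F_U^{*}\mathcal{K}^G$ into $\mathcal{K}^G=\ker(\mult^G)$. So it suffices to show that the Cartier isomorphism preserves the sub-bundle $\mathcal{K}^G$.

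Next I would identify $\mathcal{K}^G$. Over $\mathbb{C}$, and after spreading out over an open part of $\spec R$, the Kodaira--Spencer isomorphism for $A_g$ identifies $\sym^2\mathbb{E}$ with $\Omega^1_{A_g}$ and identifies $\mult$ with the restriction of forms along the Torelli immersion, so $\mathcal{K}$ is the conormal bundle of the Torelli locus (here one uses $Z\cap T_g^{\circ}\neq\emptyset$, so that $\mult$ is surjective and $\mathcal{K}$ is locally free). Passing to $G$-invariants and using the analogous description of the special subvariety $S(G)$ — its tangent bundle is the $G$-invariant part of that of $A_g$, which is why $\mathrm{rank}\,(\sym^2\mathbb{E})^G=\dim S(G)$ by Lemma~\ref{dimS(G)} — one gets $\Omega^1_{S(G)}|_Z\cong\sym^2(\mathbb{E})^G$, and the quotient map $\mult^G$ becomes (up to the harmless three-dimensional $\mathrm{PGL}_2$-ambiguity coming from $T\to T/\mathrm{PGL}_2$) the restriction $\Omega^1_{S(G)}|_Z\twoheadrightarrow\Omega^1_Z$. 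Hence $\mathcal{K}^G$ is (the pullback to $U$ of) the conormal bundle $N^{\vee}_{Z/S(G)}$, and the Proposition becomes the assertion that $\sym^2(\gamma)$ preserves $N^{\vee}_{Z/S(G)}\subset\Omega^1_{S(G)}|_Z$.

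Now the Shimura hypothesis enters. If the family gives rise to a Shimura subvariety then $Z=S_f$ is itself a special subvariety of $A_g$, hence totally geodesic in $A_g$; since $Z\subseteq S(G)\subseteq A_g$ with $S(G)$ also special, $Z$ is then totally geodesic inside $S(G)$ as well, so the second fundamental form of $Z$ in $S(G)$ vanishes. On the Hermitian locally symmetric variety $S(G)$ this says precisely that the sub-bundle $N^{\vee}_{Z/S(G)}\subset\Omega^1_{S(G)}|_Z$ is horizontal for the connection induced by Gauss--Manin. (Equivalently, one can argue with CM points: a special $Z$ carries a dense set of CM points, and for the ordinary ones the canonical Serre--Tate lift again lies on $Z$, i.e.\ $\bar Z$ is a ``canonical-lift locus'' in $\bar{S(G)}$ — the same statement read off the Serre--Tate coordinates.) Spreading this horizontality statement over $\spec R[1/N\ell]$ for a suitable auxiliary $\ell$, it reduces modulo $\wp$ for all but finitely many $p$, in particular for the primes $p\equiv 1\pmod N$ with the good ordinary open $U$ fixed before the Proposition. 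On $U$ the Cartier map $\gamma$ is, by Dwork--Ogus, the crystalline Frobenius descent attached to $C/U$; its compatibility with $\nabla$ (Frobenius conjugates the Gauss--Manin connection) together with the horizontality of $N^{\vee}_{Z/S(G)}$ forces $\sym^2(\gamma)$ to send $F_U^{*}\mathcal{K}^G=F_U^{*}N^{\vee}_{Z/S(G)}$ into $N^{\vee}_{Z/S(G)}=\mathcal{K}^G=\ker(\mult^G)$, which is the claimed vanishing. I expect the main obstacle to be exactly this last bridge: one must (i) pin down the Hodge-theoretic reformulation of ``$Z$ is special'' as the vanishing of exactly the second fundamental form that the displayed composite computes — using total geodesy of Shimura subvarieties of $A_g$ and the explicit structure of $\Omega^1_{S(G)}$ in terms of the eigenspaces $\mathbb{E}_n$ — and (ii) justify that $\gamma$ still ``remembers'' this vanishing after reduction, either via the Dwork--Ogus formalism for ordinary $F$-crystals or by evaluating along the dense set of ordinary CM points of $Z$ whose canonical lifts stay on $Z$ and invoking the rigidity of Serre--Tate coordinates. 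The remaining work is bookkeeping: choosing $p$, $\wp$ and $U$ so that spreading-out, ordinariness and local freeness of all sheaves hold simultaneously.
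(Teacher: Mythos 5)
The paper itself proves this proposition only by citation (to \cite{MZ}, Prop.~4.3, which in turn follows \cite{M}, Prop.~5.8), and the argument there is the Serre--Tate/Dwork--Ogus one that you correctly name in passing: at an ordinary point the formal deformation space of the Jacobian carries canonical coordinates, Dwork--Ogus identify the displayed composite (their class $\beta$) with the \emph{quadratic term} of the Taylor expansion, in those coordinates, of the germ of the moduli image, and the theorem of Noot--Moonen on formal linearity says that the germ of a special subvariety at an ordinary point of its reduction is a torsion translate of a formal subtorus, hence has vanishing quadratic term. Restricting to $G$-invariants picks out the germ of $Z$ inside $S(G)$, and the vanishing follows. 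Your first reduction (exactness of $(-)^G$ since $p\nmid |G|$, so the composite vanishes iff $\sym^2(\gamma)$ carries $F_U^*\mathcal{K}^G$ into $\mathcal{K}^G$) and your identification of $\mathcal{K}^G$ with the conormal bundle of $Z$ in $S(G)$ are fine and consistent with this picture.

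The genuine gap is in your main bridge. You derive from total geodesy of $Z$ in $S(G)$ that $N^{\vee}_{Z/S(G)}=\mathcal{K}^G$ is ``horizontal for the connection induced by Gauss--Manin'' and then claim that compatibility of Frobenius with $\nabla$ forces $\sym^2(\gamma)$ to preserve $\mathcal{K}^G$. Neither step holds as stated. Total geodesy is a statement about the second fundamental form for the invariant connection on the Hermitian symmetric domain; it does not make the sub-bundle $\mathcal{K}^G\subset\sym^2(\mathbb{E})^G$ flat for Gauss--Manin (a sub-bundle of $\sym^2 F^1$ is essentially never $\nabla$-stable --- Griffiths transversality already obstructs this), and $\gamma$ is not the crystalline Frobenius on $H^1_{dR}$ but the Hasse--Witt map on $\mathbb{E}=F^1$ obtained via the unit-root splitting of the ordinary family, whose interaction with $\nabla$ is precisely the nontrivial content encoded in the Serre--Tate coordinates. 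So the implication ``special $\Rightarrow$ the displayed characteristic-$p$ composite vanishes'' cannot be obtained by this differential-geometric route; you must pass through the Dwork--Ogus identification of the composite with the quadratic obstruction to formal linearity, together with Noot's theorem that special subvarieties are formally linear at ordinary points (your parenthetical ``canonical-lift locus'' remark is the right idea, but it is doing all the work and is left unproved). As written, the proposal assumes the key identity it needs to establish.
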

\begin{proof}
See \cite{MZ}, Proposition 4.3 (after \cite{M}, Prop. 5.8). 

\end{proof}

The effect of the Frobenius map on the eigenspaces can be realized by the following lemma. Note that the $\alpha_i$ below are as in 
Remark~\ref{eigenspacedim} .

\begin{lemma} \label{HasseWitt}
With notation as above, the eigenspaces $H^{1}(Y,\mathcal{O}_{Y})_{\chi}$ are stable under the Hasse-Witt map and there is a basis
$(\xi_{a,i})_{i}$ on each eigenspace in which the $(i,j)$ entry of matrix is given by the formula:
\[\sum_{\sum l_{i}=\Upsilon}\binom{q.[-\alpha_{1}]_{N}}{l_{1}}...\binom{q.[-\alpha_{s}]_{N}}{l_{s}}z_{1}^{l_{1}}...z_{s}^{l_{s}},\]
where $\Upsilon= (d_{n}-i+1)(p-1)+(i-j)$ and
$\binom{a}{b}=\frac{a!}{b!(a-b)!}$.
\end{lemma}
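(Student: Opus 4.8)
The plan is to compute the action of the Hasse--Witt (equivalently, the Cartier) operator on the $\chi$-eigenspace of $H^1(Y,\sO_Y)$ directly in terms of the explicit equations $w_i^N=\prod_j(z-z_j)^{\widetilde r_{ij}}$ defining the cover, by working on the affine curve and using the classical description of the Cartier operator via $p$-th power residues. First I would recall that for a smooth curve over an algebraically closed field $k$ of characteristic $p>0$, the Hasse--Witt matrix represents the $p$-linear map on $H^1(Y,\sO_Y)$ which is dual (under Serre duality) to the Cartier operator on $H^0(Y,\omega_Y)$. Since the $G$-action on $Y$ commutes with Frobenius, it commutes with the Cartier operator, so each eigenspace $H^1(Y,\sO_Y)_\chi$ (equivalently $H^0(Y,\omega_Y)_\chi$) is preserved; this gives the stability assertion essentially for free. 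The substance is the formula for the matrix entries.

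For the matrix, the key step is to choose the right bases. On the eigenspace $H^0(Y,\omega_Y)_{-\chi}$ (the one dual to $H^1(Y,\sO_Y)_\chi$) Remark~\ref{eigenspacedim} gives the explicit basis $\omega_{n,\nu}=z^\nu\prod_j(z-z_j)^{-t_j(-n)}dz$ for $0\le\nu\le t(n)-2$, i.e.\ $d_n$ forms. Dually one takes a basis $(\xi_{a,i})_i$ of $H^1(Y,\sO_Y)_\chi$; the natural choice is to represent classes by $z^{i-1}/\bigl(\prod_j(z-z_j)^{[-\alpha_j]_N/N}\cdot(\text{stuff})\bigr)$ type expressions, or more precisely by the functions whose Cartier/Hasse--Witt image is to be expanded. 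Then I would apply the operator: raising to the $q$-th power (here $q=p$, though the lemma is stated for a prime power $q$, presumably an iterate of Frobenius) sends $\prod_j(z-z_j)^{-t_j}$ to $\prod_j(z-z_j)^{-qt_j}$, and one must re-expand $\prod_j(z-z_j)^{q\cdot[-\alpha_j]_N/N}$ — or rather its integral-exponent adjustment — as a power series, or better, multiply out $\prod_j\bigl((z-z_j)^{\text{something}}\bigr)$ using the binomial theorem $\prod_j(z-z_j)^{c_j}=\prod_j\sum_{l_j}\binom{c_j}{l_j}z^{l_j}(-z_j)^{c_j-l_j}$ and collect the coefficient of the appropriate power of $z$. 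The exponent bookkeeping is exactly what produces $\Upsilon=(d_n-i+1)(p-1)+(i-j)$: the $(p-1)$ comes from the difference between $p$-th powers and the shift needed to land back in the space of holomorphic differentials (the "$-1$" shifts encode the $d_n=t(n)-1$ normalization), and the $(i-j)$ is the bidegree shift between the $i$-th basis vector of the source and the $j$-th of the target.

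The main obstacle I expect is precisely this exponent/index reconciliation: making the substitution $t_j(-n)$ versus $[-\alpha_j]_N/N = \langle -\alpha_j/N\rangle$ consistent, tracking how the Frobenius twist interacts with the fractional parts (one needs $p\equiv 1\pmod N$ so that $[-\alpha_j]_N$ is unchanged under multiplication by $p$ modulo $N$, which is why that congruence is imposed on $p$), and verifying that the "denominators" $\prod_j(z-z_j)^{[-\alpha_j]_N/N}$ drop out of the final expression leaving an honest polynomial in $z_1,\dots,z_s$. A secondary technical point is the passage from the affine curve to the smooth projective model $Y$: one must check that the representatives chosen genuinely span the eigenspaces of $H^1(Y,\sO_Y)$ and that no contributions from the points over $\infty$ or over the branch points are lost — this is handled exactly as in the genus and holomorphy computations of Remark~\ref{eigenspacedim} and the examples, i.e.\ by the local analysis $u^{e}=(z-z_j)$ near ramification points. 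I would organize the proof as: (1) eigenspace-stability from $G$-equivariance of Frobenius; (2) identification of Hasse--Witt with the dual of Cartier and reduction to an explicit computation on the function field; (3) choice of dual bases from Remark~\ref{eigenspacedim}; (4) the binomial expansion and extraction of the coefficient, yielding the stated formula; citing \cite{DO} or the Oort--Sekiguchi circle of ideas for the standard facts about the Cartier operator on abelian covers, and \cite{M}, \cite{MZ} for the special case of cyclic covers from which this generalizes.
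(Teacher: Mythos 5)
The paper gives no argument of its own here: the proof is a citation to \cite{MZ}, Lemma 5.1, so the only meaningful comparison is with the standard computation carried out there (which itself follows \cite{M} and \cite{DO}). Your outline is exactly that computation --- identify the Hasse--Witt map on $H^1(Y,\sO_Y)_\chi$ with the ($p^{-1}$-linear) Cartier operator on the dual eigenspace of $H^0(Y,\omega_Y)$, use the explicit basis $\omega_{n,\nu}=z^{\nu}\prod_j(z-z_j)^{-t_j(-n)}dz$ from Remark~2.6, factor $\prod_j(z-z_j)^{-t_j}$ as a $p$-th power times $\prod_j(z-z_j)^{(p-1)t_j}$, and extract coefficients by the multi-binomial expansion --- so the approach is the right one.

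Two points need correcting. First, $q$ is neither $p$ nor an iterate of Frobenius: in this paper it is only defined later (in the proof of Theorem~5.3) as $q=\frac{p-1}{N}$, and this is forced by the computation itself, since the integral exponent left over after pulling out the $p$-th power is $(p-1)t_j(-n)=(p-1)\langle -\alpha_j/N\rangle=q\,[-\alpha_j]_N$; with $q=p$ the upper arguments of the binomial coefficients, and hence the degree count $\sum_j q[-\alpha_j]_N=(p-1)(d_n+1)$ that produces $\Upsilon=(d_n-i+1)(p-1)+(i-j)$, would come out wrong. Second, the stability of the eigenspaces is not ``for free'' from $G$-equivariance: because the Hasse--Witt map is $p$-linear (and Cartier is $p^{-1}$-linear), commuting with $G$ only shows that it carries the $\chi$-eigenspace into the $\chi^{p}$-eigenspace (resp.\ $\chi^{p^{-1}}$); the hypothesis $p\equiv 1\pmod N$, which you invoke only for the fractional-part bookkeeping, is precisely what gives $\chi^{p}=\chi$ and hence the first assertion of the lemma. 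With these two repairs your plan reproduces the proof of \cite{MZ}, Lemma 5.1.
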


\begin{proof}

\cite{MZ}, Lemma 5.1.
\end{proof}

\section{Main results}

In this section, after providing the defintions and notations which we need, we prove our results.
Let us first formulate a condition which we will need later\\

(*) There exists $n\in G$ such that $\{d_n, d_{-n}\}\neq \{0,s-2\}$ and $d_n+d_{-n}\geq s-2$.\\

It is straightforward to see that this condition is not vacuous and indeed it is satisfied for infinitely many
families of abelian covers, so that Theorem~\ref{mainthm1} excludes infinitely many non-trivial examples. However,
there do exist families which do not satisfy this condition, see family $III^*$ below. 

\begin{remark} \label{irreduciblecover}
 
In this section we will will work only with families of \emph{irreducible} abelian covers of
$\mathbb{P}^{1}$, i.e., the fibers of the family are irreducible curves. For cyclic covers this implies that the single row of 
the associated matrix is not annihilated by a non-zero element of $\mathbb{Z}/N\mathbb{Z}$. More generally, for abelian covers of $\mathbb{P}^{1}$ this implies
that the rows of the associated matrix are linearly independent over $\mathbb{Z}/N\mathbb{Z}$.
\end{remark}

We also remark that if $\dim S(G)=s-3$, then, as explained earlier, the family is a special family, hence the condition $\dim S(G)>s-3$ is actually required
in the next theorem.
\begin{theorem} \label{mainthm1}
Let $C\to T$ be a family of abelian covers of the line such that $\dim S(G)>s-3$. If this family satisfies condition (*), then it does not 
give rise to a special subvariety of $A_g$. 
\end{theorem}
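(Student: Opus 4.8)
The plan is to argue by contradiction, combining the reduction mod $p$ result (Proposition~\ref{exactsequencevanish}) with the monodromy description of eigenspaces (Lemma~\ref{monodromyeigspace}) and the Hasse--Witt matrix formula (Lemma~\ref{HasseWitt}). Suppose $C\to T$ gives rise to a special subvariety of $A_g$. Pick $n\in G$ witnessing condition (*), so $\{d_n,d_{-n}\}\neq\{0,s-2\}$ and $d_n+d_{-n}\ge s-2$; since each $d_k\le s-2$ (by Proposition~\ref{dncomputation}, as $\langle-\alpha_j/N\rangle<1$) and $d_n+d_{-n}\le s-2$ whenever $n$ is not of order dividing $2$ (the two fractional parts sum to $0$ or $1$), the inequality $d_n+d_{-n}\ge s-2$ together with $\{d_n,d_{-n}\}\neq\{0,s-2\}$ forces $d_n+d_{-n}=s-2$ with both $d_n,d_{-n}\ge 1$, \emph{or} $2n=0$ with $d_n\ge 1$. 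In either case the eigenspace $\mathcal{L}_n$ has type $(a,b)$ with $a,b\ge 1$, so Lemma~\ref{monodromyeigspace} applies and $\mon^0(\mathcal{L}_n)$ is $\su(a,b)$ (or surjects onto $\su(n,n)=\Sp_{2n}$ in the order-two case). First I would record that, because the family is special, Remark~\ref{monodromydecomp} gives $\mon^0=M^{\mathrm{der}}$ and the adjoint monodromy of each eigenspace is a product of the $\psu$-factors appearing in $M^{\mathrm{ad}}_\R$.

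Next I would unwind what Proposition~\ref{exactsequencevanish} says eigenspace by eigenspace. The $G$-equivariant Kodaira--Spencer / multiplication composite, restricted to the $n$-isotypic part, is built from the bases $\omega_{n,\nu}$ of Remark~\ref{eigenspacedim}; its Frobenius-twisted version is governed by the Hasse--Witt matrix of Lemma~\ref{HasseWitt} on the eigenspace attached to $n$ and to $-n$ (the conjugate eigenspace carries the $H^{0,1}$ part). The point is that the vanishing in Proposition~\ref{exactsequencevanish} forces a nontrivial polynomial identity among the binomial-coefficient expressions of Lemma~\ref{HasseWitt} in the variables $z_1,\dots,z_s$, valid for $p\equiv 1\pmod N$ and on a dense open $U$ where fibers are ordinary. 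Because $p\equiv 1\pmod N$ one has $q\cdot[-\alpha_j]_N\equiv -\alpha_j$ with the binomial coefficients reducing to $\binom{-\alpha_j}{l_j}\bmod p$, and the condition $d_n+d_{-n}=s-2$ (equivalently $t(n)+t(-n)=s$, i.e.\ $\sum_j(\langle\alpha_j/N\rangle+\langle-\alpha_j/N\rangle)=s$, so every $\alpha_j\not\equiv 0$) is exactly what makes the relevant Hasse--Witt block square of the maximal possible size, so that its determinant is a nonzero polynomial mod $p$. Then I would show this determinant not vanishing contradicts the identical vanishing demanded by Proposition~\ref{exactsequencevanish}: concretely, specialize the $z_j$ to generic points of $U$ (possible since $U$ is open dense and the determinant is a nonzero polynomial whose degree is bounded independently of $p$ for $p$ large) and read off that $\sym^2(\gamma)$ does not kill $F_U^*\mathcal{K}^G$ in the $n$-component, because $\dim S(G)>s-3$ guarantees $\mathcal{K}^G\neq 0$ — indeed $\dim\mathcal{K}^G$ is the codimension $\dim S(G)-(s-3)>0$ measured by Lemma~\ref{dimS(G)} against $\dim T=s-3$.

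The contradiction is then assembled as follows. Being special, the family must satisfy the vanishing of Proposition~\ref{exactsequencevanish}; but the monodromy of $\mathcal{L}_n$ being the full $\su(a,b)$ (or $\Sp_{2n}$) by Lemma~\ref{monodromyeigspace} means the Higgs field on this eigen-sub-VHS is ``as nonzero as possible'', and via the characteristic-$p$ incarnation this is precisely the statement that the composite in Proposition~\ref{exactsequencevanish} is nonzero on the $n$-isotypic piece — the required polynomial identity among binomials fails. Hence no such special subvariety exists. I would phrase the last step uniformly in the two cases of Lemma~\ref{monodromyeigspace}: in the order-two case one works with the symplectic eigenspace and the same Hasse--Witt computation, now a Frobenius-linear endomorphism of a $d_n$-dimensional space, gives a nonzero self-pairing contradicting the vanishing.

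The main obstacle I expect is the second paragraph: passing cleanly from the abstract vanishing statement of Proposition~\ref{exactsequencevanish} to a concrete nonvanishing polynomial in the $z_j$ via Lemma~\ref{HasseWitt}, and in particular checking that condition (*) is exactly the combinatorial input that keeps the pertinent Hasse--Witt block of full rank (so its determinant survives reduction mod $p$ for infinitely many $p$) while the excluded case $\{d_n,d_{-n}\}=\{0,s-2\}$ is precisely where the block degenerates. Making the bound on the polynomial degree uniform in $p$, so that a single dense open $U$ and a single generic specialization of the $z_j$ works for all large $p\equiv 1\pmod N$, is the technical heart; everything else is bookkeeping with the eigenspace decomposition and citing Lemmas~\ref{monodromyeigspace}, \ref{HasseWitt} and Proposition~\ref{exactsequencevanish}.
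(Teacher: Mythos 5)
There is a genuine gap: your proposal never engages with the actual mechanism of the paper's proof, and the characteristic-$p$ step as you describe it would not work. First, the easy reduction you skip: from condition (*) one gets $d_n+d_{-n}=s-2$ with $d_n,d_{-n}\geq 1$, hence $d_nd_{-n}\geq s-3$ with equality only for $\{d_n,d_{-n}\}=\{1,s-3\}$; in every other case the monodromy count alone gives $\dim S_f\geq d_nd_{-n}>s-3$ and no reduction mod $p$ is needed. The whole difficulty is the single remaining case $\{d_n,d_{-n}\}=\{1,s-3\}$, and there the hypothesis $\dim S(G)>s-3$ is used not to show ``$\mathcal{K}^G\neq 0$ in the $n$-component'' but to produce a \emph{second} element $n'\in G$ with $\{d_{n'},d_{-n'}\}=\{1,s-3\}$ (if no such $n'$ existed, Lemma~\ref{dimS(G)} would force $\dim S(G)=s-3$). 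Two eigenspaces of the \emph{same} type cannot be separated by the monodromy argument, which is exactly why the char-$p$ input is required at this point and nowhere else.

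Second, your char-$p$ step misidentifies both the input and the output. The elements of $\mathcal{K}^G=\ker(\mult)$ that feed into Proposition~\ref{exactsequencevanish} are not supported on a single eigenspace: $\omega_{-n,0}\otimes\omega_{n,\tau}$ by itself is not killed by multiplication, and the relevant sections are the differences $\varphi_\tau=\omega_{-n,0}\otimes\omega_{n,\tau}-\omega_{-n',0}\otimes\omega_{n',\tau}$, which lie in $\mathcal{K}^G$ precisely because the two products coincide in $f_*(\omega^{\otimes 2})$. Moreover, on the ordinary locus $U$ the Hasse--Witt matrices are invertible by construction, so ``the determinant of the Hasse--Witt block is nonzero'' cannot be the source of a contradiction; the vanishing of Proposition~\ref{exactsequencevanish} applied to the $\varphi_\tau$ instead yields the identity $a^{-1}A_{\nu,\tau}=a'^{-1}A'_{\nu,\tau}$ relating the two eigenspaces, and the contradiction is extracted by comparing $t_j$-divisibility orders of both sides via Lemma~\ref{HasseWitt}, which forces $[\alpha_i]_N=[\alpha'_i]_N$ for all $i$, i.e.\ equal (hence linearly dependent) rows for $n$ and $n'$ --- contradicting the standing irreducibility assumption of Remark~\ref{irreduciblecover}, which your argument never invokes. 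Without the second eigenspace, the explicit elements of $\mathcal{K}^G$, and the irreducibility hypothesis, the proof cannot be completed along the lines you sketch.
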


\begin{proof}
Assume on the contrary that $C/T$ is a special family and take $n\in G$ as in condition (*). 
Note that $d_nd_{-n}\geq s-3$ with equality
if and only if $\{d_n,d_{-n}\}=\{1,s-3\}$. Therefore if
$\{d_n,d_{-n}\}\neq \{1,s-3\}$, then $\dim S_f>s-3$ and we are
done. Hence, we may assume that $\{d_n,d_{-n}\}=\{1,s-3\}$. Next,
we claim that there exists another $n^{\prime}\in G$ such that
$\{d_{n^{\prime}},d_{-n^{\prime}}\}=\{1,s-3\}$. Suppose that this not
the case. Observe that if for every $n^{\prime}\in G$,
$\{d_n,d_{-n}\}=\{0,a\}$ for some $a$, then Lemma~\ref{dimS(G)} gives that
$\dim S(G)=d_nd_{-n}=s-3$ which is against our assumptions. Hence
there exists a $n^{\prime}\in G$ such that $d_{n^{\prime}}\neq 0$
and $d_{-n^{\prime}}\neq 0$. If
$\{d_{n^{\prime}},d_{-n^{\prime}}\}\neq \{1,s-3\}$, then we have
an eigenspace of new type and consequently, $\dim S_f\geq
d_nd_{-n}+d_{n^{\prime}}d_{-n^{\prime}}>s-3$. So we may and do
assume that $\{d_{n^{\prime}},d_{-n^{\prime}}\}= \{1,s-3\}$. In this case both eigenspaces correspond to the same 
factor in the decomposition of $M^{ad}_{\mathbb{R}}$, see Construction~\ref{smallestshimura}. 
Let $p$ and $U$ be as in \S 2.1. So $p$ is a prime number such that $p\equiv 1 \text{ mod } N$ and set 
$q=\frac{p-1}{N}$. For these choices, consider the Hasse-Witt map
$\gamma_{(n)}:F_U^*\mathbb{E}_{U,(n)}\to \mathbb{E}_{U,(n)}$ and
let $\Gamma\in \gl_{s-3}(\mathcal{O}_U)$ with respect to the basis
$\omega_{n,\nu}$ introduced earlier in Remark~\ref{eigenspacedim}. Lemma~\ref{HasseWitt} (actually its dual version) gives a
description of the matrix $A=A_n$. We also denote
by $\gamma_{(n^{\prime})}$ and $A^{\prime}$ the corresponding
Hasse-Witt map and matrix respectively for $n^{\prime}$. We may, without loss of
generality, assume that $d_{-n}=d_{-n^{\prime}}=1$ and
$d_{n}=d_{n^{\prime}}=s-3$. Hence $A_{-n}$ and $A_{-n^{\prime}}$
are $1\times 1$ matrices, i.e., can be considered as sections of
$\mathcal{O}_U^*$ which we denote by $a, a^{\prime}$ respectively.
For each $\tau\in \{0,\cdots, s-4\}$, let $\varphi_{\tau}\in
\Gamma(U,\mathcal{K}^G)$ be defined by
\[\varphi_{\tau}:=\omega_{-n,0}\otimes \omega_{n,\tau}-\omega_{-n^{\prime},0}\otimes \omega_{n^{\prime},\tau}\]
Note that since
$\omega_{-n,0}.\omega_{n,\nu}=\omega_{-n^{\prime},0}.
\omega_{n^{\prime},\nu}$ as sections of $f_{*}(\omega^{\otimes
2})$, it follows that the image of $\varphi_{\tau}$
under $\sym^2(\gamma)$ is equal to
\[\displaystyle \sum_{\nu=0}^{s-4}(a\cdotp\Gamma_{\nu, \tau}-a^{\prime}\Gamma_{\nu, \tau})(\omega_{-n,0}\cdotp\omega_{n,\nu}).\]
But the sections $\omega_{-n,0}\cdotp\omega_{n,\nu}$ are linearly
independent for $\nu\in \{0,\cdots, s-4\}$, so Proposition~\ref{exactsequencevanish}
gives that $a\Gamma_{\nu, \tau}-a^{\prime}\Gamma_{\nu, \tau}=0$
for all $\tau, \nu \in \{0,\cdots, s-4\}$. This can be rewritten as 
$a^{-1}A_{\nu, \tau}=a^{\prime -1}A^{\prime}_{\nu, \tau}$. By examining the powers of various indeterminates in both sides of this equation, 
we show that this equality can not hold. Assume the contrary. Pick two distinct 
$i,j\in \{1,\cdots, s\}$ and set $I=\{1,\cdots, s\}\setminus \{i,j\}$. For $h= 1,2$, define
$r_n(h)$ similarly as in \cite{MZ}, proof of Theorem 6.2, to be the largest integer $r$ such that $A_{h,h}|_{t_i=0}$ is divisible by $t^{r}_{j}$. Similarly, let
$r_{-n}$ be the largest integer $r$ such that $a^{-1}|_{t_i=0}$ is divisible by $t^{r}_{j}$. Let $n\cdotp A=(\alpha_1,\cdots, \alpha_s)$ be as in Remark~\ref{eigenspacedim}. By the formulas
for $a^{-1}$ and the matrix $A$ given in Lemma~\ref{HasseWitt}, we find
\[r_{-n}=\displaystyle \max \{0,(p-1)\cdotp q\sum_{i\in I}[\alpha_i]_N\}\]
\[r_{n}(1)=\displaystyle \max \{0,(s-3)(p-1)\cdotp q\sum_{i\in I}[-\alpha_i]_N\}.\]
Note that $r_{n}(2)=0$.
With the above definitions, $u_n(h)=r_{-n}+r_n(h)$ is the largest
integer $u$ such that $(a^{-1}A_{h,h})|_{t_i=0}$ is divisible by
$t_{j}^u$. The facts that $d_{-n}=1$ and $d_{n}=s-3$ show that
$\sum [\alpha_i]_N=2N$ and $[-\alpha_i]_N=N-[\alpha_i]_N$ for
every $i$. By these relations one gets
\[u_n(1)=\displaystyle \max \{(p-1)-q\sum_{i\notin I} [\alpha_i]_N, (p-1)-q\sum_{i\in I} [\alpha_i]_N\}
=|(p-1)-q\sum_{i\notin I}
[\alpha_i]_N|=q\cdotp|N-[\alpha_k]+[\alpha_{\lambda}]|,\]
\[u_n(2)=\displaystyle \max \{0, (p-1)-q\sum_{i\notin I} [\alpha_i]_N\}=\max \{0, (p-1)-q\sum_{i\in I} [\alpha_i]_N\}=
q\cdotp\max \{0, [\alpha_k]+[\alpha_{\lambda}]-N\}\]

Analogously, we can define the above notions for $\pm n^{\prime}$
which we represent by $u^{\prime}(1), u^{\prime}(2)$. The above
relations give that $u(1)=u^{\prime}(1)$, $u(2)=u^{\prime}(2)$.
This implies that for all $k,\lambda$, we have
$[\alpha_k]_N+[\alpha_{\lambda}]_N=[\alpha^{\prime}_k]_N+[\alpha^{\prime}_{\lambda}]_N$.
From this one gets that $[\alpha_i]_N=[\alpha^{\prime}_i]_N$ for every $i\in \{1,\cdots, s\}$. But
this implies that the two rows corresponding to $n$ and
$n^{\prime}$ are equal and in particular linearly dependent. This
is in contradiction with our assumptions by Remark~\ref{irreduciblecover}.
\end{proof}

In \cite{MZ} we have used the above reduction method in order to prove
that further examples of Shimura families of abelian covers of
$\mathbb{P}^{1}$ do not exist for $s=4$. However, note that for $s>5$, condition (*) may not hold.
i.e., it could very well be that $d_n+d_{-n}< s-2$ for all $n\in G$. Example $III^{*}$ below (see Theorem~\ref{mainthm2}) is
the simplest example of such a family. In the following, we restrict our attention to
families with Galois groups of the form $\mathbb{Z}/n\mathbb{Z}
\times \mathbb{Z}/m\mathbb{Z}$ i.e. that the matrix $A$ is a
$2\times s$ matrix.

We are going to show that for $s=5$, there are eigenspaces
$\mathcal{L}_{i}$ of types $(a_{i},b_{i})$ with
$\{a_{i},b_{i}\}\neq\{a_{j},b_{j}\}$ for $i\neq j$ and such that
$\sum \delta(\mathcal{L}_{i})>s-3 $. Then by the above Remark~\ref{zeroinrow},
we conclude that $\dim S_{f}>s-3$. Note that this property is
far from being true for the families of cyclic covers of
$\mathbb{P}^{1}$. For those families it can happen that all of the
eigenspaces are either unitary (i.e. $a_{i}=0$ or $b_{i}=0$) or of
the same type. Take for example the family $(11,(1,1,1,1,7))$. In
this case all of eigenspaces are either of type $(3,0)$ (or
$(0,3)$) and hence unitary, or of type $(1,2)$. On the other hand, if 
this cyclic family is a Shimura family it must be one of the families in \cite{M} and
therefore, $N$ is one the $10$ numbers in table $1$ of \cite{M}. Of course this leaves only finitely many possibilities to
investigate. So, if in one of the rows (or the tuples associated to eigenspaces by virtue of Remark~\ref{eigenspacedim}) all of the entries are
non-zero, according to the table in \cite{M}, $N=3,4,5,6$ and we will
exclude these in what follows. In the case that there are $0$ entries in the
rows, consider the following families:
 $I)\begin{pmatrix} a_{1}&a_{2}&a_{3}&0&0\\
0&0&0&b_{2}&b_{3} \end{pmatrix}$, $II)\begin{pmatrix} a_{1}&a_{2}&a_{3}&0&0\\
0&0&b_{1}&b_{2}&b_{3}\\
 \end{pmatrix}$, $III)\begin{pmatrix}
a_{1}&a_{2}&a_{3}&a_{4}&0\\
0&0&b_{1}&b_{2}&b_{3}
  \end{pmatrix}$ or $IV)\begin{pmatrix}
a_{1}&a_{2}&a_{3}&a_{4}&0\\
0&0&0&b_{1}&b_{2}
  \end{pmatrix}$.

  \

Also consider the families $III^{*})\begin{pmatrix} 2&1&2&1&0\\
0&0&1&1&1\\
\end{pmatrix}$ with $N=3$ and
\

$III^{**})\begin{pmatrix} 2&2&3&1&0\\
0&0&1&1&2\\
\end{pmatrix}$ with $N=4$, which are special
cases of families $III$. The following theorem
can serve as an example of how the above tools based on monodromy can be
applied to exclude Special subvarieties of smaller dimensions from the Torelli locus.

\begin{theorem} \label{mainthm2} A 2-dimensional family $C\rightarrow T$ of irreducible abelian 
covers of $\mathbb{P}^{1}$ of the above form does not give rise to a Shimura variety in $A_{g}$ except
possibly the families $III^{*}$ and $III^{**}$. In particular, the set of CM fibers in these families is not dense. 
\end{theorem}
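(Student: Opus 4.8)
The plan is to go through the list of shapes $I$, $II$, $III$, $IV$ (and the two putative exceptions $III^{*}$, $III^{**}$) one at a time, and for each non-exceptional shape exhibit eigenspaces of pairwise distinct types whose associated $\delta$-invariants sum to more than $s-3=2$; by Remark~\ref{largenotshimura} (together with Lemma~\ref{zeroinrow} when one of the rows has no zero entry) this forces $\dim S_f>2$, so $Z$ is not a Shimura subvariety. First I would fix notation: write the $2\times 5$ matrix as $A=(r_{ij})$ with entries in $\Z/N$, let $G$ be its column span, and for $n=(n_1,n_2)\in G$ compute the associated tuple $n\cdot A=(\alpha_1,\dots,\alpha_5)$ and then $d_n=-1+\sum_{j}\langle-\alpha_j/N\rangle$ via Proposition~\ref{dncomputation}. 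Since $d_n+d_{-n}=s-2-\#\{j:\alpha_j=0\}$, the type $(d_n,d_{-n})$ of each eigenspace is controlled by how many zeros appear in the tuple $n\cdot A$; in each of the four shapes the block structure of $A$ restricts which $n$ can produce which zero pattern, and one reads off the possible types directly.

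Next, for each shape I would argue that among the eigenspaces $\mathcal{L}_n$ one always finds either a single eigenspace of type $(a,b)$ with $\delta=ab>2$ (so $\mon^{0,\mathrm{ad}}(\mathcal{L}_n)=\psu(a,b)$ by Lemma~\ref{monodromyeigspace}, unless $n$ has order $2$, in which case one gets the still-larger $\psp$), or two eigenspaces $\mathcal{L}_{n}$, $\mathcal{L}_{n'}$ of \emph{distinct} types in the sense of Definition~\ref{differenttypes} whose $\delta$-values sum to more than $2$; distinctness of types is what guarantees these give non-isomorphic $\R$-simple factors of $\mon^{0,\mathrm{ad}}_\R$ and hence genuinely add in $\dim S_f$. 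For shape $I$ the two rows share no branch point, so $G=\Z/N\times\Z/N$ splits as a product of two cyclic-cover families over disjoint supports, and an eigenspace of the form $n=(n_1,0)$ sees only the first row; here one invokes Lemma~\ref{zeroinrow} and the $M$-list of \cite{M} to bound $N$, then finishes by the monodromy count. Shapes $II$, $III$, $IV$ have overlapping supports, so one works with mixed eigenspaces $n=(n_1,n_2)$ with both coordinates nonzero; the genus/dimension constraint (the family is genuinely $2$-dimensional, i.e.\ $s=5$) together with irreducibility (Remark~\ref{irreduciblecover}: the two rows are $\Z/N$-linearly independent) pins down enough of the type data to produce the needed distinct-type eigenspaces. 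The exceptional shapes $III^{*}$ and $III^{**}$ are exactly the small-$N$ coincidences where every nonzero-coordinate eigenspace turns out unitary or of a single repeated type, so the monodromy method gives only $\dim S_f\ge 2$ and cannot be pushed further — these are left open.

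The main obstacle I anticipate is the bookkeeping in shapes $II$ and $III$ where the rows overlap in one or two branch points: there the tuple $n\cdot A$ can acquire a zero at an overlap position for arithmetic reasons special to small $N$, so one cannot simply say ``no zeros in the tuple hence type $(3,0)$ or $(1,2)$-and-something''; one has to check, for the residual small values of $N$ not already excluded by \cite{M}, that either a distinct-type pair exists or the family degenerates to $III^{*}$/$III^{**}$. Concretely, after the general monodromy argument disposes of all $N$ large enough (using that $\delta(\psu(a,b))=ab$ grows), one is left with a finite and explicit list of $(N,A)$ to examine by direct computation of the $d_n$; the claim is that every such case either already appears in Moonen's cyclic list (excluded by our hypothesis that the family is genuinely $2$-dimensional of the stated shape) or yields the needed large $S_f$, save for the two exceptions. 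Finally, the last sentence of the theorem — non-density of CM points when $Z$ is not special — is immediate from the André–Oort theorem for $A_g$ (\cite{T}), as already noted in Section~2.
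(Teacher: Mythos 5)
Your strategy is exactly the paper's: decompose $\mon^{0,\mathrm{ad}}_{\R}$ along the character eigenspaces, use Lemma~\ref{monodromyeigspace} and Remark~\ref{largenotshimura} to convert eigenspaces of distinct types into distinct $\R$-simple factors contributing $\delta=ab$ to $\dim S_f$, bound $N$ via Moonen's list whenever an associated tuple $n\cdot A$ has no zero entries, and use irreducibility (Remark~\ref{irreduciblecover}) to kill degenerate configurations. The observation that $d_n+d_{-n}=s-2-\#\{j:\alpha_j=0\}$, and hence that everything is controlled by where zeros appear in $n\cdot A$, is precisely the engine of the paper's argument.

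The gap is that you never execute the case analysis, and that analysis \emph{is} the theorem. The statement that the only survivors are $III^{*}$ and $III^{**}$ is not a soft consequence of the strategy; it emerges only from chasing the vanishing conditions through a chain of specific eigenspaces. For instance, for shape $II$ the paper evaluates the tuple of $(1,1)$ to force $a_3+b_1=0$, then of $(-1,1)$ to force $a_3-b_1=0$, hence $a_3=b_1=N/2$, then uses $(2,1)$ together with the normalization $\sum a_i=N$ to force $N\in\{4,6\}$, and finally lists the three possible matrices for $N=4$ and exhibits in each a type $(1,2)$ and a type $(1,1)$ eigenspace to get $\dim S_f\ge 3$. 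For shape $III$ an analogous but longer chase through $(1,1),(2,1),(1,2),(1,3),(1,4),(3,1)$ either contradicts irreducibility or lands exactly on $III^{*}$ or $III^{**}$ (e.g.\ via $a_3=b_1=N/2$, $b_3=N/2$, $b_2=N/4$). Your proposal replaces all of this with ``the claim is that every such case \dots yields the needed large $S_f$, save for the two exceptions,'' which is an assertion of the conclusion rather than a proof of it; in particular nothing in your write-up certifies that no \emph{third} exceptional family hides among the small-$N$ cases. A secondary imprecision: for shape $I$ you appeal to Lemma~\ref{zeroinrow}, but both rows of a shape-$I$ matrix contain zeros, so that lemma does not apply to the rows; what the paper actually uses is the extension of the argument to associated tuples of eigenspaces with all entries nonzero. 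The final sentence on non-density of CM points via Andr\'e--Oort is fine and matches the paper.
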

\begin{proof}  Let us show that families II are not special families. In this case, the
eigenspace associated to the element $(1,1)$ is given by the associated tuple
$(a_{1},a_{2},a_{3}+b_{1},b_{2},b_{3})$ by Remark~\ref{eigenspacedim}, so we must have
$a_{3}+b_{1}=0$, otherwise $N=3,4,5,6$ by the above argument just before the theorem, which we will exclude below. Likewise
$a_{3}-b_{1}=0$ and so we have that $a_{3}=b_{1}=\frac{N}{2}$.
Consider the eigenspace associated to the element $(2,1)$ given by
the cyclic cover $(2a_{1},2a_{2}, \frac{N}{2}, b_{2}, b_{3})$
since none of $a_{1}$ and $a_{2}$ is zero, we have also that
$2a_{1}\neq 0$ and $2a_{2}\neq 0 $ (otherwise for example $a_{1}=\frac{N}{2}$ but we may assume that $\sum
a_{i}=N$, otherwise we can replace $a_{i}$ with $-a_{i}$ and we
get an isomorphic cover for which $\sum a_{i}=N$. This forces $a_2=0$, a contradiction!). By what we said
earlier, this implies that $N=4,6$ which we have to exclude now.
For $N=4$, the only possible families are $\begin{pmatrix}
  1 & 1 & 2 & 0 & 0 \\
  0 & 0 & 2 & 1 & 1 \\
\end{pmatrix}$
 and $\begin{pmatrix}
  2 & 1 & 1 & 0 & 0 \\
  0 & 0 & 1 & 1 & 2 \\
\end{pmatrix}$ and $\begin{pmatrix}
1&1&2&0&0\\
0&0&1&1&2\\
\end{pmatrix}$. All of the $3$ families are not special as in each case, there is 
an eigenspace of type $(1,2)$ and another one of type $(1,1)$ (For example in the first case, the eigenspace
$\mathcal{L}_{(1,2)}$ has type $(1,2)$ and the eigenspace
$\mathcal{L}_{(1,3)}$ has type $(1,1)$). This shows by the above
remarks that $\dim S_{f}=\sum \delta(Q_{i}) \geq 3$. Therefore $Z
\neq S_{f}$ and the family is not special. In the same way one
sees easily that the other two families are not special too. Finally
by the same method one can conclude that for $N=3,4,5,6$, there does not
exist any special family of the form $II$. For example for $N=3$ there is only one family,
namely the
family $\begin{pmatrix} 1&1&1&0&0\\
0&0&1&1&1\\
\end{pmatrix}$ which is not special again because there is an eigenspace of
type $(1,2)$ and another one of type $(1,1)$ which forces
$\dim S_{f} \geq 3$. Next, we trun to families $III$. Again by Remark~\ref{eigenspacedim}, the eigenspace associated to
the element $(1,1)$ is given by $(a_{1},a_{2},a_{3}+b_{1},a_{4}+b_{2},b_{3})$. Hence either 
$a_{3}+b_{1}=0$ or $a_{4}+b_{2}=0$. Similarly, either $a_{3}-b_{1}=0$ or $a_{4}-b_{2}=0$. We first assume 
that $a_{3}+b_{1}=0$ and $a_{4}-b_{2}=0$. Now consider the eigenspace of $(2,1)$ which is associated to the 
tuple $(2a_{1},2a_{2},[-b_{1}],3b_{2},b_{3})$. Hence we get $2a_{1}=0$ or $2a_{2}=0$ or $3b_{2}=0$. If $2a_{1}=0$ (the case $2a_{2}=0$ is analogous) 
or equivalently $a_{1}=\frac{N}{2}$, the eigenspace of $(1,2)$ has the associated tuple $(\frac{N}{2},a_{2},b_{1},3b_{2},2b_{3})$. So 
either $3b_{2}=0$ or $2b_{3}=0$. If $2b_{3}=0$ then $b_{3}=\frac{N}{2}$. Consider the element $(1,3)$ whose eigenspace
is given by the associated tuple $(\frac{N}{2},a_{2},2b_{1},4b_{2},\frac{N}{2})$. The case $2b_{1}=0$ can not happen because already $b_3=\frac{N}{2}$ 
and $\sum b_i=N$. So we must only have $4b_{2}=0$. However again the only possible choice is $b_2=\frac{N}{4}$ and this gives the family isomorphic to
$\begin{pmatrix}
  2 & 2 & 3 & 1 & 0 \\
  0 & 0 & 1 & 1 & 2 \\
\end{pmatrix}$. Now assume that $3b_{2}=0$. Hence $b_2=\frac{N}{3}$ or $b_2=\frac{2N}{3}$. Again consider the eigenspace associated to 
$(1,3)$ which is given by the tuple $(\frac{N}{2},a_{2},2b_{1},\frac{N}{3},3b_3)$. It follows that $b_3=\frac{N}{3}$, which does not give rise to an irreducible family.
Similarly, the case that $3b_2=0$ (and $2a_1\neq 0$ and $2a_2\neq 0$) gives the family $III^*$ (by 
considering the eigenspaces associated to $(1,3)$ and $(3,1)$). It remains to treat the 
case that $a_{3}+b_{1}=0$ and $a_{3}-b_{1}=0$ which implies that $a_{3}=b_{1}=\frac{N}{2}$. We take the eigenspace of $(2,1)$ whose associated tuple is 
$(2a_{1},2a_{2},\frac{N}{2},3b_{2},b_{3})$. We first assume that $2a_{1}\neq 0,2a_{2}\neq 0$ and consider the  eigenspace of $(1,2)$ which gives rise to 
$(a_{1},a_{2},[-3a_4],3b_{2},2b_{3})$. The case $2b_{3}=0$ is not possible because already we have $b_1=\frac{N}{2}$. So $3a_4=0$ which gives $a_4=\frac{N}{3}$. 
But this implies that $a_4=b_2=\frac{N}{3}$ which is against our assumption. Finally suppose for example that $2a_{1}=0$ ($2a_{2}=0$ is analogous) and take the eigenspace of $(1,2)$ 
corresponding to $(\frac{N}{2},a_{2},\frac{N}{2},a_4+2b_2,2b_{3})$. As $2b_3=0$ is not possible, it follows that
$a_4=[-2b_2]$. Take the eigenspace of $(1,4)$ giving the tuple $(\frac{N}{2},a_{2},\frac{N}{2},2b_2,4b_{3})$. As $2b_2\neq 0$, it follows that $4b_{3}=0$ and the only 
possible case is $b_{3}=\frac{N}{4}$ which implies also that $b_{2}=\frac{N}{4}$. But this does not give rise to an irreducible family. Similarly, one can verify by the above 
method that families $I$ and $IV$ are also not special families. 
\end{proof}


\begin{thebibliography}{00}
\bibitem{A}
Y. Andr\'e, \emph{umford-Tate groups of mixed Hodge structures and the theorem of the fixed part}, Comp. Math. 82(1992), pp. 1–24.
\bibitem{C}
R. Coleman, \emph{Torsion points on curves}. In: Galois
representations and arithmetic algebraic geometry; pp. 235-247.
(Y. Ihara, ed.) Adv. Studies Pure Math. 12, North-Holland,
Amsterdam, 1987.
\bibitem{CDK}
E. Cattani, P. Deligne, A. Kaplan, \emph{On the locus of Hodge classes}, J.A.M.S. 8 (1995), pp. 483-506.

\bibitem{CFG}
E.Colombo, P. Frediani, A. Ghigi, \emph{On totally geodesic submanifolds in the Jacobian locus.} International Journal of Mathematics,
26 (2015), no. 1, 1550005 (21 pages).
\bibitem{D1}
P. Deligne, \emph{Travaux de Shimura}, in: S\'eminaire Bourbaki, Expos\'e 389, F\'evrier 1971, Lecture Notes in Mathematics
244, Springer-Verlag, Berlin, 1971, pp. 123-165.
 \bibitem{D2}
 P. Deligne, \emph{Hodge cycles on abelian varieties}  (Notes by J. S. Milne), Springer Lecture Notes in Math. 900 (1982), pp. 9-100.
\bibitem{DM}
P. Deligne ,  G. Mostow, \emph{Monodromy of hypergeometric
functions and non-lattice integral monodromy.} Inst. Hautes
\'Etudes Sci. Publ. Math. 63 (1986), 5-89. 

\bibitem{DN}
J. de Jong, R. Noot, \emph{Jacobians with complex
multiplication.} In: Arithmetic algebraic geometry (Texel, 1989),
Progr. Math. 89, Birkh\"auser, 1991, 177-192.
\bibitem{DO}
B. Dwork, A. Ogus, \emph{Canonical liftings of jacobians}.
Compositio Math. 58 (1986), 111-131.

\bibitem{FGP}
P. Frediani, A. Ghigi, M. Penegini, \emph{Shimura varieties in the Torelli locus via
Galois coverings.} Int. Math. Res. Not. IMRN, (20):10595-10623, 2015.

\bibitem{FPP}
P. Frediani, Matteo Penegini, Paola Porru, \emph{Shimura varieties in the Torelli locus via Galois coverings of elliptic curves.} 
Geometriae Dedicata 181 (2016) 177-192.

\bibitem{H}
S. Helgason, \emph{Differential geometry, Lie groups, and
symmetric spaces}. Pure and Applied Mathematics 80, Academic
Press, Inc., New York-London, 1978.

\bibitem{M}
B. Moonen, \emph{Special subvarieties arising from families of
cyclic covers of the projective line.} Documenta Mathematica 15
(2010) 793-819.

\bibitem{MO}
B. Moonen, F. Oort, \emph{The Torelli locus and special
subvarieties}. In Handbook of Moduli: Volume II, pages 549-94. 
International Press, Boston, MA, 2013.

\bibitem{MZ}
A. Mohajer, K. Zuo, \emph{On Shimura subvarieties generated by families of abelian covers of $\mathbb{P}^1$.}
J. Pure Appl. Algebra, 222(4):931–949, 2018. 
\bibitem{OS}
F. Oort and T. Sekiguchi, \emph{The canonical lifting of an ordinary Jacobian variety need not be a Jacobian variety.}
J. Math. Soc. Japan 38(3): 427-437, 1986.
\bibitem{R}
J. Rohde, \emph{Cyclic coverings, Calabi-Yau manifolds and
complex multiplication}. Lecture Notes in Math. 1975, Springer, 2009.
\bibitem{S}
G. Shimura, \emph{On purely transcendental fields of automorphic
functions}. Ann. of Math. (2) 78 (1963), 149-192.

\bibitem{T}
J. Tsimerman: \emph{A proof of the Andr\'e-Oort conjecture for $A_g$}, arXiv:1506.01466v1. 
\bibitem{VZ}
E. Viehweg, K. Zuo, \emph{Complex multiplication,
Griffiths-Yukawa couplings, and rigidity for families of
hypersurfaces.} J. Alg. Geom. 14 (2005) 481-528.
\bibitem{W}
A. Wright, \emph{Shwarz triangle mappings and Teichm\"uller
curves: abelian square-tiled surfaces},  J. Mod. Dyn. 6 (2012) 405
- 426.


\end{thebibliography}
\end{document}